\theoremstyle{plain}
\newtheorem{theorem}{Theorem}[section]
\newtheorem*{theorem*}{Theorem \ref{thm:appl}}
\newtheorem{lemma}[theorem]{Lemma}
\theoremstyle{definition}
\newtheorem{remark}[theorem]{Remark}
\newtheorem{definition}[theorem]{Definition}
\numberwithin{equation}{section}
\newcommand{\abs}[1]{\lvert#1\rvert}
\newcommand{\norm}[1]{\|#1\|}
\newcommand{\field}[1]{\mathbb{#1}}
\newcommand{\real}{\field{R}}
\newcommand{\meta}[2]{\langle #1,#2 \rangle }
\newcommand{\R}{\mathbb R}
\newcommand{\s}{\mathbb S}
\newcommand{\Hy}{\mathbb H}
\newcommand{\ov}{\overline}
\newcommand{\set}[1]{\left\{#1\right\}}
\begin{document}

\title[Min-Oo conjecture for fully nonlinear equations]
{Min-Oo conjecture for fully nonlinear conformally invariant equations}

\author[Barbosa]{Ezequiel Barbosa}     
\address{Departamento de Matem\'atica, Universidade Federal de  Minas Gerais,  Belo Horizonte-Brazil}
\email{ezequiel@mat.ufmg.br}

\author[Cavalcante]{Marcos P. Cavalcante}     
\address{Instituto de Matem\'atica, Universidade Federal de  Alagoas,  Macei\'o-Brazil}
\email{marcos@pos.mat.ufal.br}

 \author[Espinar]{Jos\'e  M. Espinar}
 \address{Departamento de Matem\'aticas, Faculad de Ciencias, Universidad de C\'adiz }
 \address{Instituto Nacional de Matem\'atica Pura e Aplicada, Rio de Janeiro - Brazil}
 \email{jespinar@impa.br}

\subjclass[2010]{Primary 53C21, 53C24; Secondary 58J05}
\date{}
\keywords{Rigidity of scalar curvature, Conformally invariant equations, Min-Oo's conjecture}

\thanks{The second author is partially supported by CNPq-Brazil (Grant 309543/2015-0), CAPES-Brazil (Grant 897/18) and FAPEAL-Brazil (Projeto Universal).
The third author is partially supported by Spanish MEC-FEDER (Grant MTM2016-80313-P and Grant RyC-2016-19359); CNPq-Brazil (Grant 402781/2016-3 and Grant 306739/2016-0); FAPERJ-Brazil (Grant 232799).}
\begin{abstract} 
In this paper we show rigidity results for super-solutions to fully nonlinear elliptic conformally invariant equations on subdomains of the standard $n$-sphere $\s^n$ under suitable conditions along the boundary. We emphasize that our results do not assume concavity assumption on the fully nonlinear equations we will work with.

This proves rigidity for compact connected locally conformally flat manifolds $(M,g)$ with boundary such that the eigenvalues of the Schouten tensor satisfy a fully nonlinear elliptic inequality and whose boundary is isometric to a geodesic sphere $\partial D(r)$, where $D(r)$ denotes a geodesic ball of radius $r\in (0,\pi/2]$ in $\s^n$, and totally umbilical with mean curvature bounded below by the mean curvature of this geodesic sphere. Under the above conditions, $(M,g)$ must be isometric to the closed geodesic ball $\overline{D(r)}$. 

As a side product, in dimension $2$ our methods provide a new proof to Toponogov's Theorem about the rigidity of compact surfaces carrying a shortest simple geodesic. Roughly speaking, Toponogov's Theorem is equivalent to a rigidity theorem for spherical caps in the Hyperbolic three-space $\mathbb H^3$.  In fact, we extend it to obtain rigidity for super-solutions to certain Monge-Amp\`ere equations.
\end{abstract}

\maketitle

\section{Introduction}

In 1995, Min-Oo \cite{O}, inspired by the work of Schoen and Yau \cite{SY1,SY2} on the Positive Mass Theorem, conjectured that if $(M^n,g)$
is a compact Riemannian manifold with boundary such that the scalar curvature of $M$ is at least $n(n-1)$ and whose boundary $\partial M$ is totally geodesic and
isometric to the standard sphere, then $M$ is isometric to the closed hemisphere 
$\overline{\mathbb S^n_+}$  equipped with the standard round metric.  
Analogous statement of the Min-Oo conjecture for $\mathbb R^n$ 
(instead for $\overline{\mathbb S^n_+}$  as the original conjecture above) 
was proved in 2002 (see \cite{Miao} and \cite{ST}).
On the other hand, a counterexample for the Min-Oo conjecture was given by Brendle, Marques and Neves in 2011  in \cite{BMN} . 

The Min-Oo conjecture on $\overline{\mathbb S^n_+}$ among metrics conformal to the standard metric on the hemisphere was proved by Hang and Wang in  \cite{HW}. Namely: 

%

\begin{theorem} [Hang-Wang \cite{HW}]\label{HW}
Let $g=e^{2\rho}g_0$ be a $C^2$ metric on the unit closed hemisphere $\overline{\mathbb S^n_+}$, where $g_0$ denotes the standard round metric. Assume that  
\begin{itemize}
\item[(a)] $R_g\geq n(n-1)$, and
\item[(b)] the boundary is totally geodesic and isometric to the standard $\mathbb S^{n-1}$.
\end{itemize}
Then $g$ is isometric to $g_0$. 
\end{theorem}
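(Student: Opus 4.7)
The plan is to reformulate Theorem~\ref{HW} as a rigidity statement for a semilinear elliptic supersolution with prescribed Dirichlet and Neumann data on the equator, then to transplant the problem to Euclidean space via stereographic projection and exploit the classification of standard Aubin bubbles.

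First, I would write $g = u^{4/(n-2)} g_0$ with $u = e^{(n-2)\rho/2}$. The scalar curvature transformation law recasts hypothesis (a) as the Yamabe-type inequality
$$-\frac{4(n-1)}{n-2}\Delta_{g_0} u + n(n-1)\, u \;\geq\; n(n-1)\, u^{(n+2)/(n-2)} \quad \text{in } \mathbb{S}^n_+.$$
Identifying $\partial \mathbb{S}^n_+$ with the equator through the given isometry, condition (b) yields two boundary constraints: matching of induced metrics gives $u \equiv 1$ on $\partial \mathbb{S}^n_+$, and the conformal formula for mean curvature, $H_g = e^{-\rho}(H_{g_0} + (n-1)\partial_\nu \rho)$, together with $H_g = H_{g_0} = 0$, forces $\partial_\nu u \equiv 0$ on $\partial \mathbb{S}^n_+$.

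Next, I would pass to Euclidean coordinates. Stereographic projection from the south pole sends $\overline{\mathbb{S}^n_+}$ diffeomorphically onto the closed unit ball $\overline{B^n} \subset \mathbb{R}^n$ and pulls back $g_0$ to $v_0^{4/(n-2)} |dy|^2$, where the standard bubble is $v_0(y) = \bigl(2/(1+|y|^2)\bigr)^{(n-2)/2}$. Setting $v := v_0 \cdot u$, the metric becomes $g = v^{4/(n-2)} |dy|^2$ on $\overline{B^n}$, the differential inequality takes the Euclidean form $-\Delta v \geq \frac{n(n-2)}{4} v^{(n+2)/(n-2)}$ in $B^n$, and the boundary data translate into $v = v_0$ and $\partial_r v = \partial_r v_0 = -(n-2)/2$ along $\partial B^n$. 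I would then extend $v$ to all of $\mathbb{R}^n$ by declaring $\tilde v := v_0$ on $\mathbb{R}^n \setminus \overline{B^n}$. The $C^1$ matching across $\partial B^n$ gives $\tilde v \in C^{1,1}(\mathbb{R}^n)$, and the absence of a jump in the normal derivative ensures that $\tilde v$ is a positive distributional supersolution of the Yamabe equation $-\Delta \tilde v = \frac{n(n-2)}{4}\tilde v^{(n+2)/(n-2)}$ on the whole of $\mathbb{R}^n$ which moreover coincides with the standard bubble $v_0$ outside a compact set.

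The rigidity conclusion $\tilde v \equiv v_0$ --- equivalently $u \equiv 1$ and $g \equiv g_0$ --- is where the main obstacle lies. Writing $w := \tilde v - v_0$, one has $w \equiv 0$ on $\mathbb{R}^n \setminus \overline{B^n}$, $w = \partial_\nu w = 0$ on $\partial B^n$, and the mean value theorem yields a linear inequality $-\Delta w \geq c(y)\, w$ in $B^n$ with $c(y) > 0$. The difficulty is that the sign of $w$ on $B^n$ is not a priori determined, so the strong maximum principle does not close the argument directly. I would address this either by the method of moving spheres applied to $\tilde v$ --- exploiting the conformal invariance of the Yamabe equation and the Caffarelli--Gidas--Spruck characterization of $v_0$ as the unique Aubin bubble centered at the origin --- or by a Hopf-type contradiction on each nodal region of $w$, using that $\partial_\nu w$ vanishes identically on $\partial B^n$ to prevent $w$ from admitting any definite sign unless it is identically zero.
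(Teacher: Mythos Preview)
The paper does not give its own proof of Theorem~\ref{HW}; it is quoted as a result of Hang--Wang. What the paper proves is Theorem~\ref{Th:Hemi}, which specializes to Theorem~\ref{HW} when one takes the elliptic data $f(\lambda)=\tfrac{2}{n}\sum_i\lambda_i$, $\Gamma=\Gamma_1$ (so that $f(\lambda_g)\ge 1$ is exactly $R_g\ge n(n-1)$). So the comparison should be with that argument.

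Your route is genuinely different from the paper's. The paper never touches the Yamabe PDE or stereographic projection. After using Obata's theorem to normalize $\rho=0$ on $\partial\mathbb{S}^n_+$ (a step you compress into ``identifying through the given isometry''---this needs Liouville/Obata to know the boundary isometry is conformal and extends to $\Phi\in\mathrm{Conf}(\mathbb{S}^n)$ preserving the hemisphere), the paper represents the dilated metric $e^{2(\rho+t)}g_0$ as an embedded horospherically concave hypersurface $\Sigma_t\subset\mathbb{H}^{n+1}$ via the Espinar--G\'alvez--Mira formula, and then runs a geometric moving-spheres argument \emph{in hyperbolic space}: it slides geodesic half-spheres $S^+_{t,s}$ along the axis through the poles until first contact, compares support functions, and invokes the strong maximum principle and Hopf lemma for the elliptic data (Lemmas~\ref{SMP} and~\ref{HL}). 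The payoff of this detour is that the comparison principle for support functions holds for \emph{any} elliptic data $(f,\Gamma)$ with no concavity assumption---which is the whole point of the paper. Your PDE approach is closer in spirit to Hang--Wang's original argument and is more elementary for the scalar-curvature case, but it is tied to the specific semilinear equation and does not generalize to nonlinear $f$.

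There is, however, a real gap in your sketch at exactly the place you flag. The linearized inequality $-\Delta w\ge c(y)\,w$ with $c(y)>0$ has the zeroth-order coefficient of the \emph{wrong sign} for the classical maximum principle and Hopf lemma, so ``Hopf on nodal regions'' does not close the argument as written: on a component where $w<0$ the operator $-\Delta-c(y)$ need not satisfy a comparison principle, and nothing forces $\partial_\nu w\neq 0$ at a boundary zero. Likewise, invoking Caffarelli--Gidas--Spruck is not enough, since that classifies \emph{solutions}, not $C^{1,1}$ supersolutions equal to a bubble outside a compact set. A moving-spheres argument for supersolutions can be made to work, but it requires a careful monotonicity/start-from-infinity setup (and attention to the $C^{1,1}$ regularity across $\partial B^n$); you would need to spell this out. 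Alternatively, Hang--Wang's own mechanism---testing the inequality on $\mathbb{S}^n_+$ against the first Dirichlet eigenfunction $x_{n+1}$ together with the boundary data $u=1$, $\partial_\nu u=0$---closes the loop without moving spheres; that is the missing ingredient in your proposal.
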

We point out here that Hang and Wang also established  
a Ricci curvature version of the Min-Oo conjecture in  \cite{HW2}.

Recently, Spiegel \cite{SP} showed a scalar curvature rigidity theorem for locally conformally flat manifolds with boundary in the spirit of Min-Oo's conjecture which is an extension of Hang-Wang's Theorem. To be more precise, let $p\in \mathbb S^n$, $0< r\leq\frac{\pi}{2}$ and 
\[
D(p,r):=\{x\in\mathbb S^n \, : \,\, d_{g_0}(x,p)<r\}
\]
be the geodesic ball of radius $r$ centered at $p$ in $\mathbb S^n$. Let $H_{r}=\cot(r)$ be the mean curvature of the boundary $ \partial D(p,r)$, measured with respect to the inward orientation. Note that $\partial D(p,r)$ is isometric to a sphere of radius $\sin(r)$. 

\begin{theorem} [Spiegel \cite{SP}]\label{Sp}
Let $(M^n,g)$, $n\geq3$, be a compact connected locally conformally flat Riemannian manifold with boundary. Assume that   
\begin{itemize}
\item[(a)] $R_g\geq n(n-1)$, and
\item[(b)] the boundary $\partial M$ is umbilic with mean curvature $H_g \geq H_r$ and isometric to $\partial D(p,r)$, $0< r \leq \pi/2$. Here, the mean curvature is measured with respect to the inward orientation. 
\end{itemize}
Then $(M,g)$ is isometric to $\overline{D(p,r)}$ with the standard metric.
\end{theorem}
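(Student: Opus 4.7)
The plan is to exploit the locally conformally flat hypothesis by realising $(M,g)$ inside $\mathbb{S}^n$ via a developing map. By classical results for locally conformally flat manifolds, there is a conformal immersion $\Phi\colon M \to \mathbb{S}^n$ together with a positive function $u$ on $M$ satisfying $g = u^{4/(n-2)}\,\Phi^{*}g_0$, where $g_0$ denotes the round metric. Umbilicity being a conformally invariant property and $\partial M$ being connected, $\Phi(\partial M)$ is a full geodesic sphere in $\mathbb{S}^n$. Since the induced boundary metric from $g$ is round and isometric to $\partial D(p,r)$, and since the M\"obius group of $\mathbb{S}^n$ restricts to the full conformal group of any geodesic sphere in $\mathbb{S}^n$, after composing $\Phi$ with an appropriate M\"obius transformation I may assume $\Phi(\partial M) = \partial D(p,r)$ and $u \equiv 1$ on $\partial M$.

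The curvature and mean-curvature hypotheses then become PDE conditions on $u$. The conformal transformation law for scalar curvature converts $R_g \geq n(n-1)$ into the semilinear elliptic inequality
$$ -\frac{4(n-1)}{n-2}\Delta_{g_0} u + n(n-1)\, u \;\geq\; n(n-1)\, u^{(n+2)/(n-2)} \qquad \text{on } M, $$
while the conformal transformation law for mean curvature, combined with $u\equiv 1$ on $\partial M$, transforms $H_g \geq H_r$ into the one-sided Neumann condition $\partial_{\nu_0} u \geq 0$ on $\partial M$, where $\nu_0$ is the $g_0$-inward normal to $\partial D(p,r)$. Thus the theorem is reduced to showing that a positive super-solution of this elliptic inequality on the geodesic ball $\overline{D(p,r)} \subset \mathbb{S}^n$, with Dirichlet value $1$ and this non-negative inward Neumann trace on $\partial D(p,r)$, must be identically $1$.

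For the concluding rigidity I would carry out a moving-spheres argument in $\mathbb{S}^n$, using the family of reflections across totally geodesic hyperspheres, which preserve $g_0$ and leave the conformally invariant equation invariant. The procedure is initialised along $\partial D(p,r)$: the Hopf lemma, applied to the linearisation of the PDE (whose zeroth-order coefficient at $u=1$ is strictly negative because $\tfrac{d}{du}(u - u^{(n+2)/(n-2)})\big|_{u=1}=-\tfrac{4}{n-2}<0$), together with the boundary data $u=1$ and $\partial_{\nu_0} u \geq 0$, allows the first reflection to engage; the argument is then continued inward until the moving sphere sweeps out $\overline{D(p,r)}$, forcing $u \equiv 1$. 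Once $u \equiv 1$ one has $g = \Phi^{*}g_0$, and $\Phi$ is then promoted to a global isometry onto $\overline{D(p,r)}$. The main obstacle lies in executing the moving-spheres procedure in the subcritical range $r < \pi/2$: when $r = \pi/2$ the problem essentially reduces to Theorem \ref{HW}, but for smaller $r$ the geodesic ball admits fewer symmetries, so one must exploit the full M\"obius invariance of the equation to generate enough reflected balls to cover $\overline{D(p,r)}$ and complete the comparison.
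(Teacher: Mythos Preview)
This statement is quoted in the paper as Spiegel's result; the paper does not prove it directly but proves the more general Theorem A, whose specialization to the trace functional recovers it. So the relevant comparison is with the paper's proof of Theorem A (Sections \ref{hemisphere}--4).

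Your reduction step via the developing map has a real gap. You write the developing map as a conformal immersion $\Phi:M\to\mathbb{S}^n$ and then proceed as if it were a diffeomorphism onto a single closed ball. But the developing map is only defined on the universal cover $\tilde M$, and even there its injectivity is nontrivial: the paper invokes Li--Nguyen (Theorem \ref{SP} here), built on Schoen--Yau, to conclude that the image has the form $\mathbb{S}^n\setminus\big(\bigcup_iD(p_i,\epsilon_i)\cup\Lambda\big)$ with possibly many disjoint balls removed and a closed limit set $\Lambda$. Ruling out extra boundary components and a nonempty $\Lambda$ is part of the \emph{content} of the theorem, not something you may assume at the outset; the paper handles this in Theorem \ref{Th:General} and Section 4.2, and only after the rigidity argument concludes that there is a single ball and $\Lambda=\emptyset$, hence that $M$ was simply connected all along.

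Once one is genuinely on a single ball $\overline{D(p,r)}$ with $u=1$ and $\partial_{\nu_0}u\ge 0$ on the boundary, your moving-spheres/Hopf-lemma outline is a reasonable route and is close in spirit to Hang--Wang's and, presumably, Spiegel's original arguments. The paper takes a genuinely different path at that stage: it represents the conformal factor as the support function of a horospherically concave hypersurface $\Sigma_t\subset\mathbb{H}^{n+1}$ (Lemma \ref{KeyLemma}) and performs a geometric sliding argument with geodesic spheres or spherical caps in $\mathbb{H}^{n+1}$, applying the maximum principle (Lemmas \ref{SMP} and \ref{HL}) at the first contact point. That hyperbolic-geometry detour is what lets the paper drop any concavity assumption on $f$; for the scalar-curvature case either method should work in principle, though you still owe the execution of the moving-spheres step for $r<\pi/2$, which you yourself flag as the main obstacle.
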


\begin{remark}
Spiegel also proved that the assumption on the mean curvature in the theorem above can be dropped provided $M$ is simply-connected and $r=\frac{\pi}{2}$. See Remark 1.3 in \cite{SP}. Therefore, Theorem \ref{Sp} is an extension of Theorem \ref{HW}.
\end{remark} 

Theorem \ref{Sp} is sharp in $r$ in the sense that one can construct counterexamples on $\overline{D(p,r)}$ for $\pi/2 < r <\pi $ (cf. \cite{HW}).

We are interested in the Min-Oo's conjecture for compact connected locally conformally flat Riemannian manifolds $(M^n,g)$ satisfying a more general curvature condition. It is well known that the scalar curvature is, up to a constant, the sum of the eigenvalues of the Schouten tensor ${\rm Sch}_g$. In fact,  let $\lambda (p)=(\lambda_1(p),\ldots, \lambda_n (p))$  denote its  eigenvalues, then
\begin{equation}\label{Eq:ScalarSchouten}
{\rm Trace}(g^{-1}{\rm Sch}_g)=\lambda_1(p)+\cdots +\lambda_n(p)=  \frac{R(g)}{2 (n-1)}\,.
\end{equation}

It is natural to ask if the Min-Oo's conjecture holds when one considers a more general function on the eigenvalues of the Schouten tensor instead of the scalar curvature. 
In order to establish properly our main result, we need to define the type of curvature function for the eigenvalues of the Schouten tensor that we will consider. First, let us recall the  notion of elliptic data originally introduced by Caffarelli, Nirenberg and Spruck \cite{CNS}; we use the theory developed by Li and Li for conformal equations (cf.  \cite{LiLi1,LiLi2}). Consider the convex cones
\begin{equation*}
\begin{split}
  \Gamma_{n} =& \{x\in\R^{n} \, :  \, \, x_{i}>0, \,\, i=1,\ldots,n\}, \\
  \Gamma_{1}= & \left\{ x\in\R^{n} \, : \,\, x_{1}+\cdots+x_{n}>0\right\} .
\end{split}
\end{equation*}

Let $\Gamma\subset\R^{n}$ be a symmetric open convex cone and  $f\in C^{1}\left(\Gamma\right)\cap C^0\left(\ov{\Gamma}\right)$. We say that $(f,\Gamma)$ is an {\it elliptic data} if the pair $(f,\Gamma)$ satisfies
\begin{enumerate}
  \item $ \Gamma_{n} \subset \Gamma \subset \Gamma_{1} $,
  \item $f$ is symmetric,
  \item $f>0$ in $\Gamma$,
  \item $f|_{\partial\Gamma}=0$,
  \item $f$ is homogeneous of degree 1,
  \item $\nabla f(x)\in\Gamma_{n}$ for all $x\in\Gamma$,
  \item $f(1,\ldots ,1)= 2 $.
\end{enumerate}

Let $(M,g) $ be a Riemannian manifold. Then, given an elliptic data $(f, \Gamma )$ we say that $g$ is a {\it supersolution} to $(f,\Gamma)$ if 
\begin{equation*}
  f(\lambda _g(p)) \geq 1, \, \, \lambda _g (p) \in \Gamma \text{ for all } p \in M ,
\end{equation*}where $\lambda _g(p)=(\lambda_1(p),\ldots, \lambda_n (p) )$ is composed by the eigenvalues of the Schouten tensor of $g$ at $p \in M$.

It is well-known that the Schouten tensor of the standard $n$-sphere is  $ {\rm Sch}_{g_0} = \frac{1}{2} g_0 $, then, condition (7) above says that we are normalizing the functional $f$ to be $1$ when considering the Schouten tensor of the standard sphere, i.e., 
$$ f(1/2 , \ldots , 1/2) =  2^{-1}f(1,\ldots ,1) = 1 ,$$ where we have used that $f$ is homogeneous of degree one.

In this paper, we prove that the Min-Oo's conjecture holds for super-solutions to elliptic 
data $(f, \Gamma)$ in locally conformally flat manifolds. 
Namely, we prove the following result.

\begin{quote}
{\bf Theorem A.} \label{theoremA} {\it Let $(M^n,g)$  be a compact connected locally conformally flat Riemannian manifold with boundary $\partial M$. Let $(f, \Gamma)$ be an elliptic data and assume that $g$ is a supersolution to $(f,\Gamma)$ in $M$, i.e., 
$$f(\lambda _g(p)) \geq 1, \, \, \lambda _g(p)\in\Gamma \text{ for all } p\in M .$$

Assume that $\partial M$ is umbilical with mean curvature $H_g \geq H_r $ and  isometric to $\partial D(p,r)$, $0<r\leq \pi/2$. Then $(M,g)$ is isometric to $\overline{D(p,r)}$ with the standard metric.

} 
\end{quote}
\begin{remark}
We also can prove that the assumption on the mean curvature in the theorem above can be dropped provided $M$ is simply-connected and $r=\frac{\pi}{2}$.
\end{remark} 
\medskip

We emphasize that in our theorem above \emph{no concavity assumption on $f$ is needed}.
Of special interest is when we consider $\sigma_k(\lambda(p))$, the $k$-th elementary symmetric polynomial of the eigenvalues $\lambda_1 (p)$,...,$\lambda_n (p)$. However, 
these cases, and in fact for all concave $f$ ($\sigma_k^{1/k}$ is concave), the result 
follows  from the theorem of Spiegel. Indeed, we only need to 
prove that under the additional concavity assumption of $f$ in $\Gamma$, one has
\[
f(\lambda) \leq R_g/[n(n-1)], \textrm{ for all } \lambda\in \Gamma.
\]

The above inequality can be proved as follows. By the homogeneiety of $f$, 
$\sum f_{\lambda_i}\lambda_i=  f(\lambda)$ and therefore 
$2=f(1,\ldots, 1)$ and, in view of the symmetry of $f$, 
$f_{\lambda_i}(1, \ldots, 1) = \frac 2 n$, $i=1, \ldots, n$. By the concavity of $f$ we get
\[
f(\lambda) \leq f(1,\ldots, 1) + \sum_{i=1}^n f_{\lambda_i}(\lambda_i-1) = R_g/[n(n-1)].
\]

Our approach relies in a geometric  method developed by the third author, G\'alvez and Mira 
in \cite{EGM} and further developments contained in \cite{AE,BEQ,BQZ,CE,Esp}, where 
conformal metrics on spherical domains are represented by hypersurfaces in the hyperbolic space. 
In order to reduce our problem on locally conformally flat manifolds to conformal metrics on 
subdomains of the sphere, we use results contained in the work of Spiegel \cite{SP} and  Li and 
Nguyen \cite{LiNg} based on the deep theory by Schoen and Yau \cite{SY3} on the developing map 
of a locally conformally flat manifold. Hence, combining these results, we show that Theorem A is 
equivalent to a rigidity result for horospherically concave hypersurfaces with boundary in the 
Hyperbolic space $\mathbb H^{n+1}$. In particular, in dimension $n=2$, these methods provide a 
new proof to Toponogov's Theorem \cite{Top} and, in fact, we can extend it.

\subsection*{Acknowledgments}
The authors are grateful to the referee for him/her
valuable comments and suggestions that have improved this article.

\section{Preliminaries}\label{elliptic}

We will establish in this section the necessary tools we will use along this paper.

\subsection{Representation formula and regularity}

Here we recover the hypersurface interpretation of conformal metrics on the sphere developed in 
\cite{BEQ,EGM}.  Let us denote by $\mathbb{L}^{n+2}$ the Minkowski spacetime, that is, the vector 
space $\R ^{n+2}$ endowed with the Minkowski spacetime metric $\meta{}{}$ given by
$$ 
\meta{\bar{x}}{\bar{x}} = - x_0 ^2 + \sum _{i=1}^{n+1} x_i ^2,
$$
where $\bar{x} \equiv (x_0 , x_1 , \ldots , x_{n+1})\in \R^{n+2}$. 

Then hyperbolic space, de Sitter spacetime and positive null cone are given, respectively, by the hyperquadrics
\begin{equation*}
\begin{split}
\mathbb{H} ^{n+1} &= \set{ \bar{x} \in \mathbb L ^{n+2} : \, \meta{\bar{x}}{\bar{x}} = -1, \, x_0 >0}\\
d\s^{n+1}_1 &= \set{ \bar{x} \in \mathbb L ^{n+2} : \, \meta{\bar{x}}{\bar{x}} = 1}\\
\mathbb{N}^{n+1}_+ &= \set{ \bar{x} \in \mathbb L ^{n+2} : \, \meta{\bar{x}}{\bar{x}} = 0, \, x_0 >0}.
\end{split}
\end{equation*}

Let $\phi:M^n\to \mathbb{H}^{n+1} \subset \mathbb{L} ^{n+2}$ be an isometric immersion of an oriented hypersurface, with orientation $\eta :M^n\to d\s^{n+1}_1 \subset \mathbb{L} ^{n+2}$.  We define the associated light cone map as
$$\psi := \phi - \eta : M^n \to \mathbb{N} ^{n+1}_+ \subset \mathbb{L} ^{n+2} .$$

If we write $\psi = (\psi _0 , \ldots , \psi _{n+1})$, consider the map  $G$ (the hyperbolic Gauss map) given by:
$$ G = \frac{1}{\psi _0}(\psi _1 , \ldots , \psi _{n+1}) : M \to \s ^n, $$

Hence, if we label $e^{\rho}:=\psi _0$ (the hyperbolic support function), we get
$$ \psi = e^{\rho} (1 , G) \in \mathbb{L} ^{n+2}.$$

Set $\Sigma := \phi(M^n)\subset \mathbb H^{n+1}$ with orientation $\eta$. We say that $\Sigma$ is horospherically concave if $\Sigma$ lies (locally) around any point $p\in \Sigma$ strictly in the concave side of the tangent horosphere at $p$ and its normal points into the concave side of the tangent horosphere.

\begin{theorem}[\cite{EGM}]
Let $\phi:\Omega\subset \mathbb S^n \to \mathbb H^{n+1}$ be an oriented piece of horospherically concave hypersurface with orientation $\eta : \Omega \to d\s ^{n+1}_+$ and hyperbolic Gauss map $G(x) = x$. Then
\begin{equation}\label{formula}
\phi(x) = \frac{e^\rho}{2}\big(1+e^{-2\rho}(1+ \|\nabla \rho  \|^2)\big)(1,x)+e^{-\rho}(0,-x+\nabla\rho), 
\end{equation}and its orientation is given by 
\begin{equation}\label{orientation}
\eta (x) = \phi (x) - e^{\rho}(1,x). 
\end{equation}

Moreover, the eigenvalues $\lambda_i$ of the Schouten tensor of $g=e^{2\rho}g_0$ and  the principal curvatures $k_i$ of $\phi$ are related by
$$\lambda_i=\frac 1 2 - \frac 1 {1+k_i}.$$

Conversely, given a conformal metric $g = e^{2\rho}g_0$  defined on a domain of the sphere $\Omega\subset \mathbb S^n$  such that the eigenvalues of its Schouten tensor are all less than $1/2$,  then the map $\phi$ given by \eqref{formula} defines an immersed, horospherically concave hypersurface in $\mathbb H^{n+1}$ with orientation \eqref{orientation} whose hyperbolic Gauss map is $G(x) = x$ for $x \in \Omega$.

Here, the connection $\nabla $ and the norm $\norm{\cdot} $ are with respect to the standard metric $g_0$ on $\s ^n$.
\end{theorem}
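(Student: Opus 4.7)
The plan is to derive formula \eqref{formula} from the algebraic constraints imposed by $\phi \in \mathbb{H}^{n+1}$, $\eta \in d\mathbb{S}^{n+1}_1$, and $\meta{\phi}{\eta} = 0$, and then to read off the principal-curvature/Schouten-eigenvalue correspondence from the Weingarten relation. Throughout, I identify $\mathbb{L}^{n+2} = \mathbb{R} \oplus \mathbb{R}^{n+1}$ and write vectors as $(v_0, \vec v)$.

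First I would observe that $\meta{\psi}{\psi} = \meta{\phi}{\phi} - 2\meta{\phi}{\eta} + \meta{\eta}{\eta} = -1 - 0 + 1 = 0$, so $\psi$ lies in the positive null cone and may be written as $\psi = e^\rho(1, G)$ for a positive scalar $e^\rho = \psi_0$ and a map $G \colon M \to \mathbb{S}^n$. Horospherical concavity is precisely the condition ensuring that $\psi$ is an immersion and $G$ is a local diffeomorphism, so the hypothesis $G(x) = x$ lets me reparameterize by $x \in \Omega \subset \mathbb{S}^n$ and set $\psi(x) = e^{\rho(x)}(1, x)$.

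Next I would solve for $\phi$ algebraically. Since $\eta$ is normal to $\Sigma$, $\meta{\phi}{d\phi} \equiv 0$; differentiating $\meta{\phi}{\eta} = 0$ and using $\meta{d\phi}{\eta}=0$ yields $\meta{\phi}{d\eta} = 0$, hence
\[
\meta{\phi}{\phi} = -1, \qquad \meta{\phi}{\psi} = -1, \qquad \meta{\phi}{d\psi(X)} = 0 \text{ for all } X \in T\mathbb{S}^n.
\]
Writing $\phi = (\phi_0, \vec\phi)$ and using $d\psi(X) = X(\rho)\,\psi + e^\rho(0, X)$, the third condition forces the tangential part of $\vec\phi$ (with respect to $T_x\mathbb{R}^{n+1} = \mathbb{R}\,x \oplus T_x\mathbb{S}^n$) to equal $e^{-\rho}\nabla\rho$, while the second gives $-\phi_0 + \vec\phi\cdot x = -e^{-\rho}$. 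Setting $\vec\phi = a\,x + e^{-\rho}\nabla\rho$ and imposing $-\phi_0^2 + |\vec\phi|^2 = -1$ reduces to a single scalar equation whose unique $\phi_0 > 0$ solution,
\[
a = \tfrac{e^\rho}{2} - \tfrac{e^{-\rho}}{2}\big(1 - \norm{\nabla\rho}^2\big),\qquad \phi_0 = a + e^{-\rho},
\]
reassembles into formula \eqref{formula}; then \eqref{orientation} follows from $\eta = \phi - \psi$.

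For the curvature correspondence I would differentiate $\psi = \phi - \eta$ and combine with the Weingarten relation $d\eta = -A\,d\phi$ to obtain $d\psi = (I + A)\,d\phi$. A direct computation using $x \perp T_x\mathbb{S}^n$ in $\mathbb{R}^{n+1}$ gives $\meta{d\psi(X)}{d\psi(Y)} = e^{2\rho}\,g_0(X,Y)$, so $\psi$ pulls the induced metric of $\Sigma$ back to the conformal metric $g = e^{2\rho}g_0$ on $\Omega$, scaled in each principal direction by $(1+k_i)^{-2}$. Combining this with the standard conformal transformation law for the Schouten tensor under $g_0 \mapsto e^{2\rho}g_0$ and the Gauss equation for $\Sigma \subset \mathbb{H}^{n+1}$ yields, after diagonalizing in a principal frame, the claimed identity $\lambda_i = \tfrac{1}{2} - \tfrac{1}{1+k_i}$. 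I expect this last identification to be the main obstacle, as it requires carefully matching two a priori unrelated curvature computations and verifying the signs. Once it is in place, the converse is essentially a verification: formula \eqref{formula} satisfies $\meta{\phi}{\phi} \equiv -1$ and $\phi_0 > 0$ by construction, and the condition $\lambda_i < 1/2$ is equivalent to $1 + k_i > 0$, which is precisely what makes $d\phi = (I+A)^{-1}d\psi$ non-degenerate and hence $\phi$ an immersion with the claimed orientation and Gauss map.
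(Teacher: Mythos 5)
The paper does not prove this theorem internally; it is cited from \cite{EGM}, so there is no proof to compare against. Your algebraic derivation of \eqref{formula} is correct and clean: the three constraints $\meta{\phi}{\phi}=-1$, $\meta{\phi}{\psi}=-1$, and $\meta{\phi}{d\psi(X)}=0$ do determine $\phi$ uniquely once you write $\vec{\phi}=a\,x+e^{-\rho}\nabla\rho$, and your scalars $a$ and $\phi_0=a+e^{-\rho}$ reassemble into \eqref{formula}; \eqref{orientation} is then immediate from $\eta=\phi-\psi$. The setup $d\psi=(I+A)\,d\phi$ and $\meta{d\psi(X)}{d\psi(Y)}=e^{2\rho}g_0(X,Y)$ is also correct.

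The genuine gap is the step you yourself flag as the main obstacle, and the route you sketch there is not the right one. The Gauss equation for $\Sigma\subset\mathbb{H}^{n+1}$ controls the intrinsic curvature of the \emph{induced} metric $I$, but in a $g_0$-orthonormal principal frame $\{X_i\}$ one has $I(X_i,X_j)=e^{2\rho}(1+k_i)^{-2}\delta_{ij}$, which rescales differently in different directions; $I$ is not conformal to $g_0$, so the Gauss equation cannot be fed into the conformal transformation law to produce $\mathrm{Sch}_g$. What actually works is a direct computation of the tensor $I+II$. Since $\meta{d\phi(X)}{\psi}=0$, one has $\meta{d\phi(X)}{d\psi(Y)}=e^{\rho}\,\overrightarrow{d\phi(X)}\cdot Y$, and differentiating $\vec\phi=a\,x+e^{-\rho}\nabla\rho$ with $D_Xx=X$ and $D_X\nabla\rho=\nabla_X\nabla\rho-(X\cdot\nabla\rho)x$ gives
\[
\meta{d\phi(X)}{d\psi(Y)} \;=\; e^{\rho}a\,g_0(X,Y)\;-\;(X\rho)(Y\rho)\;+\;\nabla^2\rho(X,Y)\;=:\;T(X,Y).
\]
On the other hand, in the principal frame $\meta{d\phi(X_i)}{d\psi(X_j)}=(1+k_j)I(X_i,X_j)=\tfrac{e^{2\rho}}{1+k_i}\delta_{ij}$. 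Substituting $e^{\rho}a=\tfrac{e^{2\rho}}{2}-\tfrac12+\tfrac12\|\nabla\rho\|^2$ and using $\mathrm{Sch}_g=\bigl(\tfrac12-\tfrac12\|\nabla\rho\|^2\bigr)g_0+d\rho\otimes d\rho-\nabla^2\rho$ gives the identity $T+\mathrm{Sch}_g=\tfrac{e^{2\rho}}{2}g_0$, so the $g_0$-eigenvalues of $\mathrm{Sch}_g$ are $\tfrac{e^{2\rho}}{2}-\tfrac{e^{2\rho}}{1+k_i}$ and the $g$-eigenvalues are $\lambda_i=\tfrac12-\tfrac{1}{1+k_i}$. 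This explicit computation of $T=I+II$ in terms of $\rho$ is the missing ingredient; once it is in place, your remarks about the converse (with $\lambda_i<1/2\Leftrightarrow 1+k_i>0$ guaranteeing that $\phi$ is an immersion) go through.
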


Let $\Omega \subset \s ^n$ be a relatively compact domain with smooth boundary. Given $\rho \in C^{2} (\overline{\Omega})$, the above representation formula says that $\phi$ and $\eta$ are $C^1$ maps and $\Sigma := \phi (\overline \Omega) \subset \mathbb{H}^{n+1}$ is a compact hypersurface with boundary $\partial \Sigma = \phi(\partial \Omega)$ whose tangent plane varies $C^1$. Moreover, the corresponding conformal metric $g= e^{2\rho} g_0$ on $\Omega$ is the horospherical metric associated to $\Sigma$. Observe that, since $\rho \in C^2(\overline \Omega)$, the eigenvalues of the Schouten tensor associated to $g=e^{2\rho} g_0$ are continuous in $\Omega$ and hence there exists $t >0$ so that the eigenvalues of the Schouten tensor associated to $g_t = e^{2(\rho +t)} g_0$ are less than $1/2$.

In the Poincar\'{e} ball model of $\mathbb H ^{n+1}$, the representation formula (cf. \cite{AE}) is given by 
\begin{equation*}
  \varphi_{t}(x)=\frac{1-e^{-2\rho _t(x)}+\|\nabla e^{-\rho _t}(x) \|^{2}}{ \left( 1+e^{-\rho_t(x)} \right)^{2}+\|\nabla e^{-\rho_t}(x) \|^{2}}x-\frac{1}{ \left( 1+e^{-\rho_t(x)} \right)^{2}+\|\nabla e^{-\rho_t}(x) \|^{2}}\nabla\left( e^{-2\rho_t}\right)(x).
\end{equation*} 

Set $\epsilon = e^{-t}$, then 
$$f  (x, \epsilon):= - \frac{2(e^{\rho (x)}+ \epsilon )}{ \left( e^{ \rho(x) }+\epsilon \right)^{2}+ \epsilon ^2 \, \|\nabla \rho(x) \|^{2}}$$and 
$$ g (x,\epsilon) = \frac{2 \epsilon }{ \left( e^{ \rho(x) }+\epsilon \right)^{2}+\epsilon ^2 \, \|\nabla \rho(x) \|^{2}} $$are in $C^1(\overline{\Omega}\times [0,+\infty))$ and they are smooth in $\epsilon$, moreover, the vector field $\nabla \rho$ is $C^1$ in $\overline{\Omega}$, since $\rho \in C^2(\overline \Omega)$. Thus, 
$$ \varphi _\epsilon (x) = x + \epsilon  \left( f (x,\epsilon) x + g (x,\epsilon) \nabla \rho (x)\right) \in \mathbb{B}^{n+1} \subset \R ^{n+1} $$belongs to $C^1(\overline \Omega)$, in particular, the vector field 
$$ Y(x,\epsilon) :=  f (x,\epsilon) x + g (x,\epsilon) \nabla \rho (x) \in C^1(\overline \Omega \times [0,+\infty)).$$

Let $\tilde Y : \s ^n \times [0,+\infty ) \to \mathbb{B}^{n+1} \subset \R ^{n+1}$ be the Lipschitz extension of $Y $ so that 
$\left.\tilde Y  \right._{|\overline\Omega \times [0,+\infty)} =Y $. Therefore, the corresponding extension map 
$$ \tilde \varphi : \s ^n \times [0, +\infty ) \to \R^{n+2} $$is Lipschitz in $x$ and smooth in $\epsilon $ so that $\tilde \varphi (x,\epsilon) = \varphi_\epsilon (x)$ for all $(x,\epsilon ) \in \overline \Omega \times (0,+\infty)$ satisfying $\tilde \varphi (x,0) = x$, i.e., $\tilde\varphi _0 (\cdot)= \tilde \varphi (\cdot , 0)$ is the identity map, which is an embedding of the sphere $\s ^n$ into $\R ^{n+1}$. Since $\tilde \varphi _\epsilon : \s ^n \to \R ^{n+1}$ is a Lipschitz deformation of an embedding, from \cite{FukNak}, there exists $\epsilon _0 >0$ so that $\tilde \varphi _\epsilon : \s ^n \to \R ^{n+1} $ is an embedding for all $\epsilon \in [0, \epsilon _0 )$. Thus, summarizing all we have done in this subsection, we obtain:

\begin{lemma}[\cite{AE,BEQ,EGM}]\label{KeyLemma}
Let $\Omega \subset \s ^n$ be a relatively compact domain with smooth boundary and $\rho \in C^2 (\overline \Omega)$. Then, there exists $t >0$ so that the horospherically concave hypersurface $\phi _t : \overline \Omega \to \mathbb{H} ^{n+1}$ given by \eqref{formula} is a compact embedded hypersurface $\Sigma _t = \phi _t (\Omega)$ with boundary $\partial \Sigma _t = \phi _ t (\partial \Omega)$. Moreover, the eigenvalues of its associated horospherical metric $g_t := e^{2(\rho +t)}g_0$ are less than $1/2$.
\end{lemma}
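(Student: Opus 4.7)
The statement contains two assertions to verify: first, that for $t$ sufficiently large the eigenvalues of the Schouten tensor of $g_t = e^{2(\rho+t)}g_0$ are pointwise less than $1/2$ on $\ov\Omega$, which is required in order to apply the converse direction of the representation formula and produce the immersion $\phi_t$; and second, that after possibly enlarging $t$ further, the resulting immersion is actually an embedding of $\ov\Omega$.

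For the first point, note that $g_t = e^{2t} g$ is a constant conformal rescaling of $g = e^{2\rho}g_0$. Under such a rescaling the Schouten tensor is unchanged as a $(0,2)$-tensor, so its eigenvalues with respect to $g_t$ are those of $g$ multiplied by $e^{-2t}$. Since $\rho \in C^2(\ov\Omega)$ and $\ov\Omega$ is compact, the eigenvalues of the Schouten tensor of $g$ are uniformly bounded on $\ov\Omega$, and hence a choice of $t$ sufficiently large guarantees $\lambda_i(g_t) < 1/2$ everywhere. The converse direction of the representation theorem then yields a well-defined horospherically concave immersion $\phi_t: \ov\Omega \to \Hy^{n+1}$ whose hyperbolic Gauss map is the identity on $\Omega$.

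For embeddedness, I would use the Poincar\'e ball description already worked out, in which $\phi_t$ is represented by $\varphi_\epsilon(x) = x + \epsilon Y(x,\epsilon)$ with $\epsilon = e^{-t}$ and $Y(x,\epsilon) = f(x,\epsilon)x + g(x,\epsilon)\nabla\rho(x)$. The regularity $\rho \in C^2(\ov\Omega)$ implies that $Y$ belongs to $C^1(\ov\Omega \times [0,+\infty))$; extending it Lipschitz-continuously to all of $\s^n \times [0,+\infty)$ produces a family $\tilde\varphi_\epsilon : \s^n \to \R^{n+1}$ of Lipschitz maps, smooth in $\epsilon$, with $\tilde\varphi_0 = \mathrm{id}$, whose distance in Lipschitz norm from the identity tends to $0$ as $\epsilon \to 0$. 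Invoking the stability of topological embeddings under small Lipschitz perturbations \cite{FukNak}, there exists $\epsilon_0 > 0$ such that $\tilde\varphi_\epsilon$ is an embedding of $\s^n$ for every $\epsilon \in [0,\epsilon_0)$. Picking $t$ large enough so that both $e^{-t} < \epsilon_0$ and the condition $\lambda_i(g_t) < 1/2$ hold simultaneously, the restriction $\tilde\varphi_\epsilon|_{\ov\Omega} = \varphi_\epsilon$ delivers the claimed compact embedded hypersurface $\Sigma_t$ with boundary $\phi_t(\partial\Omega)$.

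The main obstacle to this scheme is obtaining uniform smallness of $\epsilon Y$ in Lipschitz norm as $\epsilon \to 0^+$, which in turn demands controlled behavior of $\nabla Y$ up to $\partial \Omega$; this is exactly why the hypothesis $\rho\in C^{2}(\ov\Omega)$, with regularity through the boundary, appears in the statement. Once this control is secured, the embeddedness reduces to the topological stability result for small Lipschitz perturbations of the identity on $\s^n$, and the two steps combine to yield the lemma.
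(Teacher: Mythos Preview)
Your proposal is correct and follows essentially the same route as the paper: the discussion preceding the lemma in the paper is precisely the argument you outline --- the constant rescaling to push the Schouten eigenvalues below $1/2$, the Poincar\'e ball expression $\varphi_\epsilon(x)=x+\epsilon\,Y(x,\epsilon)$ with $\epsilon=e^{-t}$, the Lipschitz extension of $Y$ from $\ov\Omega$ to $\s^n$, and the appeal to \cite{FukNak} for stability of embeddings under small Lipschitz perturbations of the identity. Your added remark that the Schouten eigenvalues scale by $e^{-2t}$ under constant conformal dilation is the explicit mechanism behind the paper's one-line observation that continuity of the eigenvalues on the compact $\ov\Omega$ yields the required $t$.
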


It is important to recall the connection between isometries of the hyperbolic space ${\rm Iso}(\mathbb H^{n+1}) $ and conformal diffeomorphisms of the sphere ${\rm Conf}(\s^n)$. It is well-known that each isometry $T\in {\rm Iso}(\mathbb H^{n+1})$ induces a unique conformal diffeormorphism $\Phi \in {\rm Conf}(\s^n)$. 

Let  $T\in {\rm Iso}(\mathbb H^ {n+1})$ be an isometry and $\Phi \in {\rm Conf}(\s ^n)$ be the unique conformal diffeomorphism associated to $T$. Then, given a horospherically concave hypersurface $\Sigma \subset \mathbb H ^{n+1}$ with horospherical metric $g$, one can see that (cf. \cite{Esp}) the horospherical metric $\tilde g$ associated to $\tilde \Sigma = T(\Sigma)$ is given by $\tilde g = \Phi ^* g$. Vice versa, given a conformal metric $g$ on a subdomain of the sphere with associated hypersurface $\Sigma$, given by the representation formula under the appropriated conditions, the associated horospherically concave hypersurface $\tilde \Sigma $ associated to the conformal metric $\tilde g = \Phi ^* g$ is given by $\tilde \Sigma = T(\Sigma)$.

\subsection{Locally conformally flat metrics and developing map}

Let $(M^n,g)$, $n\geq 3$, be a Riemannian manifold with a $C^k$-metric $g$. We say that $(M,g)$ is locally conformally flat if for every point $p \in M$ there exist a neighborhood $U$ of $p$ and $\varphi \in C^k (U)$ such that the metric $e^{2\varphi} g$ is flat on $U$. An immersion $\Psi: (M, g) \rightarrow (N, h)$ is a conformal immersion if we can write $\Psi^* h = e^{2\varphi} g$ for some function $\varphi$. 

If $(M, g)$ is a locally conformally flat manifold it is well known that there exists a conformal map $\Psi : M \rightarrow \s^n$, called the {\it developing map} which is unique up to conformal transformations of $\s ^n$.  
When $M$ is compact and simply-connected with umbilical boundary, Spiegel \cite{SP} proved that the developing map can be taken as a diffeomorphism over the hemisphere $\overline{\mathbb S^n}_+$.

If $M$ is not simply-connected, we can pass to the universal covering $\tilde{M}$ to obtain a developing map $\Psi : \tilde{M} \rightarrow \s^n$ which is, under some assumptions, injective. In fact, Li and Nguyen \cite{LiNg} showed the following theorem:

\begin{theorem}\label{SP}
Let $(M, g)$ be a compact connected locally conformally flat manifold with boundary. Assume that $M$ has positive scalar curvature and that $\partial M$ is umbilic and simply-connected with non-negative mean curvature. Let $\Pi: \tilde{M}\rightarrow M$ be the universal covering. Then there exists an injective conformal map $\Psi : \tilde{M} \rightarrow \s^n$ which is a conformal diffeomorphism onto its image. The image is of the form
$$\Omega = \Omega (\epsilon_i , p_i , \Lambda) := \s ^n\backslash \left( \bigcup\limits_{i} D(p_i ,\epsilon_i)\cup \Lambda\right),$$
where the $D(p_i ,\epsilon_i)$ are geodesic balls in $\s^n$ centered at $p_i$ of radius $\epsilon _i$ with disjoint closures and $\Lambda$ is the so-called limit set, a closed subset of Hausdorff dimension at most $\frac{n-2}{2}$.
\end{theorem}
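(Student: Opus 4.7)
The plan is to combine classical developing-map theory with Schoen and Yau's deep injectivity theorem for the developing map of closed locally conformally flat manifolds of positive scalar curvature \cite{SY3}, reducing the boundary case to the closed case by filling in each boundary component with a standard round ball. The argument proceeds in three main steps: produce the developing map, analyze it on $\partial\tilde M$, and then apply Schoen-Yau to a filled-in closed manifold.

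Since $\tilde M$ is simply connected and locally conformally flat, Kuiper's classical theorem yields a conformal local diffeomorphism $\Psi: \tilde M \to \s^n$, unique up to post-composition with an element of ${\rm Conf}(\s^n)$. I would then analyze $\Psi$ on $\partial \tilde M$: each connected component of $\partial M$ being simply connected, each component $S$ of $\partial \tilde M$ projects diffeomorphically to its image in $\partial M$, and is therefore a compact, boundaryless, simply connected, umbilical hypersurface. Conformal invariance of umbilicity implies $\Psi|_S$ is a local diffeomorphism from $S$ onto an open subset of a round $(n-1)$-sphere in $\s^n$. A local diffeomorphism from a compact manifold to a connected manifold of the same dimension is a covering map; by simple connectedness of $S$ this covering must be trivial, so $\Psi|_S$ is a diffeomorphism onto a full round sphere $\partial D(p_i, \epsilon_i)$. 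The non-negative mean curvature hypothesis (inward orientation) then forces $\epsilon_i \in (0,\pi/2]$ and determines the side of $\partial D(p_i,\epsilon_i)$ on which $\Psi(\tilde M)$ accumulates.

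For the final step, I would form a closed locally conformally flat manifold $M^\ast$ by gluing a copy of the round ball $\overline{D(p_i,\epsilon_i)}$ along each boundary component of $\partial M$. Since both sides carry the standard spherical conformal structure in a neighborhood of the gluing spheres, $M^\ast$ inherits a natural locally conformally flat structure; moreover, attaching $n$-cells for $n\ge 3$ does not change the fundamental group, so the universal cover $\tilde M^\ast$ is obtained from $\tilde M$ by filling each boundary component with a round ball. After exhibiting a smooth representative metric in the conformal class of $M^\ast$ with positive scalar curvature (combining positivity on $M$ with positivity on each ball, through a Yamabe-type perturbation near the seams), Schoen-Yau's theorem applied to $M^\ast$ yields an injective developing map $\Psi^\ast: \tilde M^\ast \to \s^n$ whose image is the complement of a closed set $\Lambda$ of Hausdorff dimension at most $(n-2)/2$. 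By uniqueness of the developing map one may arrange $\Psi^\ast|_{\tilde M}=\Psi$; restricting to $\tilde M\subset \tilde M^\ast$ and subtracting the images of the filled-in balls, $\Psi$ becomes an injective conformal diffeomorphism of $\tilde M$ onto $\Omega = \s^n\setminus\bigl(\bigcup_i D(p_i,\epsilon_i)\cup\Lambda\bigr)$.

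The main obstacle is the filling step: producing a smooth locally conformally flat metric of positive scalar curvature in the conformal class of $M^\ast$, given that the mean curvatures of the two sides along each gluing sphere need not agree a priori. This is where the non-negative mean curvature hypothesis enters essentially, through a conformal deformation localized near each gluing sphere that removes the potential corner while preserving positivity of the scalar curvature. Once this deformation is in place, the rest is a direct application of Schoen-Yau's theorem and a bookkeeping of how the developing maps of $M$ and $M^\ast$ relate.
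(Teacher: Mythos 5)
Your proposal diverges fundamentally from the paper's argument: the paper does not re-prove the decomposition from scratch but derives it from Theorem 1.4 of Li--Nguyen, whose proof is based on passing to the \emph{double} $M_2 = M \cup_{\partial M} (-M)$ and applying Schoen--Yau to that closed manifold; the paper's contribution is only to verify, using the simple-connectedness of $\partial M$, that the closed balls $\overline{D(p_i,\epsilon_i)}$ are pairwise disjoint and that the resulting domain is simply connected. You instead propose to \emph{fill} each boundary sphere with a standard round ball and apply Schoen--Yau to the filled manifold $M^\ast$. These are genuinely different reductions, and the doubling route is the one that makes the nonnegative-mean-curvature hypothesis do its work cleanly: the reflected metric on $M_2$ is Lipschitz with a distributional scalar curvature along the seam whose singular part has the favorable sign precisely when $H_g \geq 0$ on both sides (i.e.\ the reflection produces a jump in mean curvature of the correct sign), and it can then be mollified within its conformal class while retaining $R > 0$. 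This is a well-understood mechanism.

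The filling step in your proposal, which you yourself flag as ``the main obstacle,'' is a genuine gap. Gluing $\overline{D(p_i,\epsilon_i)}$ along $\partial M$ extends the \emph{conformal structure} smoothly (since the developing map sends $\partial M$ onto a round sphere), but it does not hand you a metric of nonnegative scalar curvature on $M^\ast$: the conformal factors on the two sides agree neither in value nor in normal derivative along the seam, and there is no analogue of the distributional sign argument for a filling (the mean curvature of the inserted ball is $\cot(\epsilon_i)$, which has no a priori relationship to $H_g$). Your appeal to a ``Yamabe-type perturbation localized near the seams'' is an assertion, not an argument, and I do not see how the hypothesis $H_g \geq 0$ enters to make it true; indeed, positivity of the Yamabe invariant of a glued manifold is not in general inherited from positivity on the pieces. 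Unless you can exhibit a metric in the conformal class of $M^\ast$ with $R \geq 0$ and not identically zero, Schoen--Yau's theorem cannot be invoked, and the chain breaks. A further, smaller, omission is that even granting the filling step, the disjointness of the \emph{closures} $\overline{D(p_i,\epsilon_i)}$ (a part of the statement which the paper's proof devotes substantial care to) is left implicit in your argument; it does follow from injectivity of $\Psi^\ast$ once that is established, but it should be spelled out since $\Lambda$ could a priori accumulate on the boundary spheres.
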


For the sake of completeness we include their proof here.

\begin{proof}
Actually, we can see that Theorem \ref{SP} is a consequence of Theorem 1.4 in \cite{LiNg}. In order to see that, note that one has an additional hypothesis that $\partial M$ is simply connected. Hence, the two points which need to be checked, under this additional hypothesis, are (1) the closed balls $\bar{D}(p_i ,\epsilon_i)$ in \cite{LiNg} are mutually disjoint and (2) the set $G=\s ^n\backslash \left( \bigcup\limits_{i} D(p_i ,\epsilon_i)\cup \Lambda\right)$ in \cite{LiNg} is simply connected. 

Point (1) is a consequence of Property (ii) in [15, Theorem 1.4] and some facts from point-set topology. First, $\Psi^{-1}(\partial M) =\cup_{i}(\partial D(p_i ,\epsilon_i) \setminus \Lambda)$ and, as $\Lambda$ is closed and its $(n - 2)$-Hausdorff measure is zero, $\partial D(p_i ,\epsilon_i) \setminus \Lambda$ is (path-)connected for every $i$. This implies, in view of Property (ii) in [15, Theorem 1.4], that the connected components of $\Psi^{-1}(\partial M)$ are the collection $\{\partial D(p_i ,\epsilon_i) \setminus \Lambda\}$. Second, for any connected component $X$ of $\Psi^{-1}(\partial M)$, the map $\Psi: X\rightarrow  \partial M$ is a covering map. Now if $\partial M$ is simply connected, each such $X$ is homeomorphic to $\partial M$, and so, $X$ is compact. Thus, $\partial D(p_i ,\epsilon_i) \cap \Lambda$ is empty for every $i$, and, in view of Property (ii) in [15, Theorem 1.4], the balls $\bar{D}(p_i ,\epsilon_i)$ are mutually disjoint.

Let us turn to point (2). If $\Lambda$ is empty, the collection $\{D(p_i ,\epsilon_i)\}$ of balls must be finite thanks to property (iii) in [15, Theorem 1.4], in which case the simple connectedness of $G$ is clear. Assume that $\Lambda$ is non-empty. Recall that $\Psi$ is constructed in \cite{LiNg} as the covering map from the universal cover $\tilde{M}_2 \subset \mathbb{S}^n$ of the double $M_2$ of $M$, still denoted by $\Psi$ here, and $G$ is a connected component of $\Psi^{-1}(M)$. Let $\Lambda_2:=\mathbb{S}^n\setminus \tilde{M}_2$ so that $\Lambda=\Lambda_2 \cap(\mathbb{S}^n \setminus (\cup D(p_i ,\epsilon_i)))$.

Suppose first that $D(p_i ,\epsilon_i) \cap\Lambda_2\neq\emptyset$ for every $i$. In this case, as $G=\tilde{M}_2 \setminus(\cup D(p_i ,\epsilon_i))$, $\partial D(p_i ,\epsilon_i)\subset G$ (due to the simple connectedness of $\partial M$ as in point (1)) and $D(p_i ,\epsilon_i) \cap (\mathbb{S}^n \setminus \tilde{M}_2)\neq \emptyset$ for each $i$, there is clearly a retraction from $\tilde{M}_2$ onto $G$. The simple connectedness of $G$ follows from that of $\tilde{M}_2$.

Assume now that $D(p_{i_0} ,\epsilon_{i_0})\cap \Lambda_2=\emptyset$ for some $i_0$. We have $\Psi^{-1}(M_2 \setminus M) \subset \cup(D(p_i ,\epsilon_i) \setminus \Lambda_2)$. Hence, as $\Psi$ is locally homeomorphic and by Property (iii) in [15, Theorem 1.4], 
\[
 \Psi^{-1}(\overline{M_2 \setminus M}) \subset \overline{\Psi^{-1}(M_2 \setminus M)}\cap(\mathbb{S}^n \setminus \Lambda_2)\subset \cup (\bar{D}(p_i ,\epsilon_i)\setminus \Lambda_2).\,
\]
As the balls $\bar{D}(p_i ,\epsilon_i)$ are disjoint, the above implies that there is a connected component of $\Psi^{-1}(\overline{M_2 \setminus M})$ lying entirely in $D(p_{i_0} ,\epsilon_{i_0})$, which covers $\overline{M_2 \setminus M}$, which is a copy of $M$. We can use this set in place of the original set $G$ to run the argument, in which case $\Lambda\subset D(p_{i_0} ,\epsilon_{i_0})\cap\Lambda_2=\emptyset$ becomes empty and we are done as above.
\end{proof}

Note that, since we are assuming $\lambda_g(p)\in \Gamma$, for all $p\in M$, and $\Gamma \subset \Gamma_1$, hence we have that $R_g>0$. Therefore, under the conditions of Theorem A, we can apply Theorem \ref{SP}.

\section{The case of the hemisphere}\label{hemisphere}

We begin by considering the baby case, say conformal metrics on the hemisphere. This case will enlighten the geometric ideas contained in the proof. 

\begin{theorem}\label{Th:Hemi}
Let $(f, \Gamma)$ be an elliptic data and let $g=e^{2\rho}g_0$,  $\rho \in C^2 (\overline{\mathbb S^n_+})$, $n\geq 3$, be a supersolution to $(f,\Gamma)$ on the closed  hemisphere $\overline{\mathbb S^n_+}$, i.e., 
$$f(\lambda(p)) \geq 1, \, \, \lambda _g(p)\in\Gamma \text{ for all } p\in \mathbb{S}^n_+ .$$

Assume that the boundary $\partial \mathbb S^n_+$ with respect to $g$ is isometric to  $\partial \mathbb{S}^n_+$. Then $g=\Phi^*g_0$, where $\Phi \in {\rm Conf}(\s^n)$ preserving $\overline{\s ^n _+}$.
\end{theorem}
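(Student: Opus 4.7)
The plan is to convert this conformal PDE problem into a geometric problem about hypersurfaces in $\mathbb{H}^{n+1}$ via Lemma \ref{KeyLemma}, and then apply an Alexandrov-type moving spheres argument. First, replace $\rho$ by $\rho+t$ for $t>0$ sufficiently large, so that the Schouten eigenvalues of $g_t=e^{2(\rho+t)}g_0$ are strictly less than $1/2$. Lemma \ref{KeyLemma} then produces a compact embedded horospherically concave hypersurface $\Sigma_t\subset \mathbb{H}^{n+1}$ with hyperbolic Gauss map equal to the inclusion $\overline{\mathbb{S}^n_+}\hookrightarrow\mathbb{S}^n$. Under the same procedure, the standard hemisphere metric (i.e.\ $\rho\equiv 0$) corresponds to a geodesic half $n$-sphere $S_t$ of hyperbolic radius $t$ in $\mathbb{H}^{n+1}$, whose boundary lies on a totally geodesic hyperplane $P\subset \mathbb{H}^{n+1}$, which it meets orthogonally, and all of whose principal curvatures equal $\coth t$.

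Next, I would translate the hypotheses into geometric conditions on $\Sigma_t$. After precomposing with a conformal self-map of $\overline{\mathbb{S}^n_+}$ realizing the isometry between the boundary metrics, we may assume $\rho|_{\partial\mathbb{S}^n_+}\equiv 0$; this, together with the extrinsic matching at the boundary (the counterpart of the mean curvature condition $H_g\geq 0$ with $r=\pi/2$ in Theorem A), forces $\partial\Sigma_t$ to lie on $P$ and to coincide with $\partial S_t$. The supersolution inequality $f(\lambda_g)\geq 1$ translates, via the pointwise identity $\lambda_i=\tfrac12-\tfrac{1}{1+k_i}$, into a fully nonlinear elliptic inequality
\[
F(k_1,\dots,k_n):=f\!\left(\tfrac12-\tfrac{1}{1+k_1},\dots,\tfrac12-\tfrac{1}{1+k_n}\right)\geq 1
\]
on the principal curvatures $k_i$ of $\Sigma_t$. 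Ellipticity of $F$ in each $k_i$ is inherited from condition (6) on $f$ and the strict monotonicity of $k\mapsto\tfrac12-\tfrac{1}{1+k}$, and the normalization (7) yields $F\equiv 1$ on $S_t$.

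The final step is the Alexandrov-type comparison. Introduce the family $\{S_t^\alpha\}$ of isometric copies of $S_t$ obtained by sliding $S_t$ along geodesics in $P$ via ambient isometries of $\mathbb{H}^{n+1}$ preserving $P$. For $\alpha$ large enough, $S_t^\alpha$ is disjoint from $\Sigma_t$; decreasing $\alpha$ produces a first contact. At that contact point, $\Sigma_t$ lies on the horospherically concave side of $S_t^\alpha$ and satisfies $F(k_{\Sigma_t})\geq 1=F(k_{S_t^\alpha})$; the tangency principle for the elliptic equation $F=1$ then yields $\Sigma_t=S_t^\alpha$ in a neighborhood, and hence globally by connectedness and real-analytic continuation. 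Pulling back through the representation formula \eqref{formula}, $g$ coincides with $\Phi^*g_0$ for the conformal diffeomorphism $\Phi\in{\rm Conf}(\mathbb{S}^n)$ induced by the isometry of $\mathbb{H}^{n+1}$ carrying $S_t^\alpha$ onto $S_t$, and $\Phi$ preserves $\overline{\mathbb{S}^n_+}$ because that ambient isometry preserves $P$.

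The hard part will be that $f$ is \emph{not assumed concave}, so the textbook strong maximum/tangency principle for fully nonlinear elliptic equations (which typically invokes concavity to apply Krylov--Safonov type regularity) is not directly available. The workaround is to use only the pointwise ellipticity $\nabla f\in\Gamma_n$, which guarantees strict monotonicity of $F$ in each $k_i$ and hence strict ellipticity of the linearization along any comparison pair of smooth supersolution/solution, which is enough to run a classical tangency argument in the $C^2$ setting provided by Lemma \ref{KeyLemma}. A secondary subtlety is that the first contact may occur on the common boundary $\partial\Sigma_t=\partial S_t^\alpha\subset P$; there a Hopf-type boundary tangency principle is needed, and its hypotheses are satisfied precisely because the implicit mean curvature/umbilicity information at $\partial\mathbb{S}^n_+$ fixes the angle of $\Sigma_t$ with $P$ to match that of the model $S_t^\alpha$.
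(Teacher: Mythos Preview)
Your overall strategy --- pass to the hypersurface picture via Lemma \ref{KeyLemma} and run a moving-spheres comparison against the geodesic half-sphere $S_t$ --- matches the paper's. But there is a genuine gap in your boundary analysis.

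You assert that, after normalizing so that $\rho|_{\partial\mathbb{S}^n_+}\equiv 0$, ``the extrinsic matching at the boundary \dots\ forces $\partial\Sigma_t$ to lie on $P$ and to coincide with $\partial S_t$.'' Theorem \ref{Th:Hemi} assumes \emph{no} extrinsic condition: only that $(\partial\mathbb{S}^n_+,g)$ is isometric to the round $\mathbb{S}^{n-1}$. The normal derivative $\partial\rho/\partial\nu$ (equivalently $H_g$) is completely uncontrolled. From the representation formula, for $x\in\partial\mathbb{S}^n_+$ one has $\phi_t(x)\in\mathcal H(x,t)\cap P\big({\rm arcsinh}(-e^{-t}H_g(x))\big)$, so $\phi_t(x)\in P$ only when $H_g(x)=0$; in general $\partial\Sigma_t$ wanders over a family of equidistants to $P$ and is \emph{not} $\partial S_t$. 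Even the weaker condition $H_g\geq 0$ (which you invoke but which is also not assumed here) would not place $\partial\Sigma_t$ on $P$. Consequently your sliding of $S_t^\alpha$ by isometries preserving $P$, and the boundary tangency case you set up, are based on a false picture: the first touching of the moving half-sphere could easily occur at an interior point of $S_t^\alpha$ against a \emph{boundary} point of $\Sigma_t$, where no maximum principle applies.

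The paper avoids this by a different geometric control on $\partial\Sigma_t$ that uses only $\rho_t=t$ on the boundary (Claim A): $\partial\Sigma_t$ is trapped \emph{outside} the solid cylinder $\mathcal C_t$ of radius $t$ about the geodesic $\gamma$ joining the poles, and touches $\mathcal C_t$ only at points of $P$ where additionally $\partial\rho/\partial\nu=0$. The comparison half-sphere is then translated along $\gamma$ (not within $P$); its interior stays inside $\mathcal C_t$, so it never meets $\partial\Sigma_t$. A topological linking argument (Claim B) gives the support-function inequality $\rho_t\geq\hat\rho_{t,s_0}$, after which the paper applies the strong maximum principle and Hopf lemma \emph{to the support functions} (Lemmas \ref{SMP} and \ref{HL}), not a tangency principle on curvatures. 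Two minor points: there is no ``real-analytic continuation'' available since $\rho$ is only $C^2$, and the non-concavity issue you flag is exactly what Lemmas \ref{SMP}--\ref{HL} (from \cite{JinLiLi,LiLi1,LiLi2}) are there to handle.
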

\begin{proof}
First, $\partial \s ^n _+$ is isometric to $\s ^{n-1}$ implies that $g_{|\partial \mathbb S ^n_+}$ is isometric to $\s ^{n-1}$. Hence, by Obata's Theorem, there exists a conformal diffeomorphism $\tilde \Phi  \in {\rm Conf}(\s ^{n-1})$ so that $g_{|\partial \mathbb S ^n_+}= \tilde \Phi ^* \left.g_0\right. _{|\partial \mathbb S ^n_+}$ along $\partial \s ^n _+$. Observe that $\tilde \Phi$ can be extended to a conformal diffeomorphism $\Phi \in {\rm Conf}(\s^n)$ so that $\Phi (\s^n _+) = \s ^n _+$ and $\Phi _{| \partial \s ^n_+} = \tilde \Phi$. Hence, up to the conformal diffeomorphism $\Phi$, we can assume that $g= g_0$ along $\partial \s ^n _+$. In other words, 
\begin{equation}\label{RB}
\rho = 0 \text{ on } \partial \s ^n _+ .
\end{equation}

Moreover, since $\partial \s ^n _+$ is totally geodesic with respect to $g_0$ and $g$ is conformal to $g_0$, $\partial \s ^n _+$ is totally 
umbilical with respect to $g$, in particular, the mean curvature along $\partial \s ^n _+$ with respect to $g$ is given by
\begin{equation}\label{MB}
H_g:= -e^{-\rho}\frac{\partial \rho}{\partial \nu} = -\frac{\partial \rho}{\partial \nu} \text{ on } \partial \s ^n _+ ,
\end{equation}where $\nu = e_{n+1}$ is the inward normal along $\partial \mathbb S ^n_+$. 

Let $P \subset \mathbb{H}^{n+1}$ be the totally geodesic hyperplane whose boundary at infinity is the equator of the upper hemisphere, i.e., $\partial _\infty P = \partial \s ^n _+$. Denote by $P^+$ (resp. $P^-$) the connected component of $\mathbb{H}^{n+1}\setminus P$ that contains the north pole (resp. south pole) at its boundary at infinity. Also, denote by $P(b)$, $b\in \R$, the equidistant to $P$ at distance $b$. Note that $P(b)\subset P^+$  when $b>0$ and $P(b)\subset P^-$ when  $b<0$. We define $P(b)^+$ (resp. $P(b)^-$) as the connected component of $\mathbb H ^{n+1}\setminus P(b)$ containing the north pole (resp. south pole) in its boundary at infinity. Clearly, $\partial _\infty P(b) = \partial _\infty P = \partial \mathbb S ^n _+ $ for all $b \in \R$.

Now, we fix $t>0$ as in Lemma  \ref{KeyLemma} such that  the eigenvalues of the Schouten  tensor of  $g_t=e^{2(\rho +t)}g_0$  satisfy $\lambda^t_i(x)<1/2$ for all $x\in \overline{\mathbb S^n_+}$ and the compact horospherically concave hypersurface with boundary $\Sigma _t=\phi _t(\mathbb S^n_+) \subset \mathbb H^{n+1} \subset \mathbb L ^{n+2}$  given by the representation formula (\ref{formula}) associated to $\rho _t =\rho +t$ is embedded. 
Given $p\in  \mathbb{H}^{n+1}$ we denote by $d_{\mathbb H^{n+1}}(p, P)$  the signed distance to $P$, that is,  it is positive if $p \in P^+$ and negative if $p\in P^-$. Then, taking $t>0$ big enough in Lemma \ref{KeyLemma} we can assume that $\Sigma _t$ is above $P(m)$, i.e., $\Sigma _t \subset \overline{P(m)^+}$, where $m = {\rm min}\set{ d_{\mathbb H^{n+1}}(p, P) \, : \, \, p \in \partial \Sigma _t} $. In fact, one can check (cf. \cite[Section 2.4]{AE} for details) that  $m= {\rm min}\set{ {\rm arc}\sinh (-e^{-t}H_g(x)) \, : \,\, x \in \partial \mathbb S ^n _+ } $.

Observe that \eqref{RB} implies
\begin{equation}\label{Rt}
\rho_t = t \text{ and } \frac{\partial \rho _t}{\partial \nu} =  \frac{\partial \rho }{\partial \nu}  \text{ on } \partial \s ^n _+ .
\end{equation}

We claim:

\begin{quote}
{\bf Claim A:} {\it Let $\gamma : \R \to \mathbb H^{n+1}$ be the complete geodesic (parametrized by arc-length) joining the south and north poles. Let $\mathcal C _t$ be the solid cylinder in $\mathbb H ^{n+1}$ of axis $\gamma$ and radius $t$. Then, $\partial \Sigma _t$ lies outside the interior of $\mathcal C _t$, and $\partial \Sigma _t \cap \mathcal C _t \subset P$. Moreover, if  $\partial \Sigma _t \cap \mathcal C _t \neq \emptyset$ then at such points $\Sigma _t$ is orthogonal to $P$.}
\end{quote}
\begin{proof}[Proof of Claim A]

Note that, since $x\in \partial \mathbb S^n_+$, $\phi _t(x) \in \mathcal H(x,t)$, where $\mathcal H(x,t)$ is the horosphere whose point at 
infinity is  $x$ and signed distance to the origin is $t>0$ (see \cite{BEQ}). It proves the first part of the claim 

To finish the proof, we must check that at a point where $\frac{\partial \rho}{\partial \nu} (x) =0$ we get that $\Sigma _t$ is orthogonal to $P$. The unit normal along $\Sigma_t$ is given by
$$ \eta _t(x) = \frac{e^{-\rho-t}}{2} \big(\norm{\nabla \rho }^2 -1+e^{\rho+t}  \big) (1,x) + e^{-\rho-t}(0,-x+ \nabla \rho) $$and the normal along $P$ is given by $ n(p) = (0,e_{n+1}) $ for all $ p \in P$. Hence, we have
$$ \meta{\eta_t (x)}{n(\phi (x))} = 0 ,$$that is, $\Sigma _t$ is orthogonal to $P$ at $x$.
\end{proof}

Let $(1, {\bf 0}):=(1,0, \ldots , 0) \in \mathbb H ^{n+1} \subset \mathbb L ^{n+2}$ be the origin in the hyperboloid model  (note that such point corresponds to the actual origin in the Poincar\'e ball model). Denote by $S_t \subset \mathbb{H}^{n+1}$ the geodesic sphere centered at the origin $(1,{\bf 0})$ of radius $t$. 

It is easy to see that its horospherical metric is given by $\tilde g _ t = e^{2t} g_0$ (cf. \cite{Esp}). Consider the half-sphere $S_t ^+ = S_t \cap \overline{P^+}$ and observe that  $S_t ^+$ is orthogonal to $P$ along the boundary $\partial S_t^+$. 

Let $T_s :\mathbb H ^{n+1} \to \mathbb H ^{n+1}$ be the hyperbolic translation at distance $s$ along $\gamma$ so that $T_s((1, {\bf 0}))= \gamma (s)$, an isometry of $\mathbb H ^{n+1}$. It is clear that $T_s(S_t^+\setminus \partial S_t ^+) \cap \partial \Sigma _t = \emptyset $,  for all $s \in \R$ by Claim A.

Let $\Phi _s \in {\rm Conf}(\s ^n)$ be the unique conformal diffeomorphism associated to $T_s$. Set $S_{t,s} := T_s (S_t)$ for all $s \in \R$, then the horospherical metric associated to $S_{t,s}$ is given by $\tilde g _{t,s} = e^{2t} \Phi _s ^* g_0$ in $\s ^n$ and denote by $\tilde \rho _{t,s} \in C^{\infty}(\s ^n )$ the horospherical support function associated to $S_{t,s}$, i.e, $\tilde g _{t,s}= e^{2\tilde \rho _{t,s}} g_0$.
Let $\hat g_{t,s}$ be the restriction of $\tilde g _{t,s}$ to $\overline{\s ^n_+}$, i.e., $\left.\tilde g _{t,s}\right. _{|\s ^n _+} = \hat g_{t,s}$, and $\hat \rho _{t,s}$ the restriction of $\tilde \rho _{t,s}$ to $\overline{\s ^n _+}$.

Consider $\bar s\in \R$ so that $S_{t,s} ^+ \cap \Sigma _t= \emptyset$ for all $s < \bar s$. Increasing $s$ from $\bar s$ to $+\infty$, we must find a first instant $s_0$ so that $S_{t,s_0} ^+ \cap \Sigma _t\neq \emptyset$ tangentially. If  $S_{t,s_0} ^+ $ does not coincides with  $\Sigma _t$ identically, such tangential point must be either at an interior point of $\Sigma _t$ or at a boundary point of $\partial \Sigma_t$. In the latter case we must  necessarily have $s_0=0$ by the second part of  Claim A.

\begin{quote}
{\bf Claim B:} {\it $\rho _t \geq \hat \rho _{t,s_0}$ on $\overline{\s ^n _+}$.}
\end{quote}
\begin{proof}[Proof of Claim B]
From Claim A we have that  $\mathcal H (x,r)$ either does not touch $S_{t,s_0}$ or does touch at a tangent point, for all $x \in \partial \s ^n _+$ and  all $r\geq t$. This says that $\rho _t \geq \hat \rho _{t,s_0}$ on $\partial \s ^n _+$ because $\Sigma_t$ is horospherically concave. 
Now, let us prove that $\rho _t \geq \hat \rho _{t,s_0}$ on $\s ^n _+$. Assume there exists $x \in \s ^n _+$ so that $\rho _t (x)< \hat \rho _{t,s_0}(x)$. Then, as pointed out above, the horosphere $\mathcal H(x,\hat \rho _{t,s_0}(x))$ does not touch $\Sigma _t$ and touch at one point $q\in S _{t,s_0}$. Observe that $\mathcal H(x,\hat \rho _{t,s_0}(x)- \delta) $ does not touch $\Sigma _t$ for any $\delta <  \hat \rho _{t,s_0}(x)-\rho _t (x)$. Denote by $\beta _1 $ the geodesic ray joining $q$ and the point at infinity $x \in \s ^n _+$, this arc is completely contained in the horoball determined by $\mathcal H(x,\hat \rho _{t,s_0}(x))$ and hence $\beta _1 \cap \Sigma _t =\emptyset$. Denote by $\beta _2$ the geodesic joining $\gamma (s_0)$ with the south pole ${\bf s}\in \s ^n$, then $\Sigma _t \cap \beta _2 = \emptyset$, otherwise we contradict the fact that $S_{t,s_0}$ is the first sphere of contact with $\Sigma _t$ coming from infinity. Finally, denote by $\beta _3$ the geodesic arc joining $\gamma (s_0)$ and $q$. Consider the piecewise smooth curve $\beta = \beta_1 \cup \beta _2 \cup \beta _3$ and observe that $\beta $ is homotopic to $\gamma$, moreover, $\partial \Sigma _t$ is homotopic to $\partial \s ^n _+$, which implies that the linking number of $\beta $ and $\partial \Sigma _t$ is $\pm 1$ (depending on the orientation), that is, they must intersects. The only possibility is that they intersect in the interior of $\beta _2$, however, this implies that $\Sigma _t$ and $S_{t,s_0}$ has a transverse intersection, contradicting that $S_{t,s_0} $ is the first sphere of contact. Thus, $\rho _t \geq \hat \rho _{t,s_0}$ on $ \s ^n _+$.
\end{proof}

Note that, since the elliptic data is homogeneous of degree one, we have that $g_t$ satisfies 
$$ f(\lambda _{g_t}(p)) = f (e^{-t}\lambda _g(p))\geq e^{-t} \text{ for all } p\in \s^n_+$$and the horospherical metric of $S_{t,s} ^+$ satisfies
$$f(\lambda _{\hat g _{t,s}}(p))= f(e^{-t}\lambda _{g_0}(p)) = e^{-t}f(1/2,\ldots,1/2)=e^{-t} \text{ for all } p\in \s^n_+,$$that is
$$  f(\lambda _{g_t}(p)) \geq  f(\lambda _{\hat g_{t,s}}(p)) \text{ for all } p\in \s^n_+ . $$

Thus, if $S_{t,s_0} ^+$ intersects $\Sigma _t$ at an interior point, this contradicts the strong maximum principle (see Lemma \ref{SMP} in the Appendix). Observe that we do not really need that both hyperbolic support functions are positive. To overcame this we can either dilate at the beginning with a $t$ big enough so that $\rho _t >0$ or translate $\Sigma _t$ and $S_{t,s_0}$ at distance $\abs{s_0}$ using $T_{\abs{s_0}}$. Then, the new hyperbolic support functions are positive, they coincide at some point in the interior and differ along the boundary. All these conditions follow since $T_{\abs{s_0}}$ is an isometry. 

Therefore, it remains the case that $S_{t,s_0} ^+$ intersects $\Sigma _t$ at a boundary point. Since in this case  $s_0 =0$, the argument above shows that $\rho _t \geq t $ on $\overline{\s^n _+} $. This inequality follows since $\rho _t \geq \hat \rho _{t,s}$ on $ \overline{\s ^n _+}$ for all $s<0$, taking $s \to 0$ one can easily see that $\hat \rho _{t,s} \to \hat \rho _t := t $. 


If $\partial \Sigma _t \cap P = \emptyset$, then $S_{t,s} \cap \partial \Sigma _t = \emptyset$ for all $s \in \R$. Hence, we can translate $S_t$ up to the north pole until we find a first contact point with $\Sigma _t$, such point must be an interior point. However, as above, this contradicts the strong maximum principle. 

Therefore,  by Claim A, there exists $x \in \partial \mathbb S^n _+$ so that 
$$ \frac{\partial \hat \rho _t}{\partial \nu} (x)=0,$$hence, by the Hopf Lemma (cf. Lemma \ref{HL} in the Appendix), we obtain that $\rho _t \equiv t$ in $\overline{\s ^n _+}$. Thus, $g_t = \tilde g _t$ and hence, $g= g_0$.
\end{proof}

The same ideas work on geodesic balls in $\mathbb S ^n$ of radius $r< \pi/2$. However, in this situation we must impose an extra condition on the mean curvature along the boundary. Geometrically, in the previous result we compared $\Sigma _t$ with the semi-sphere $S_t ^+$. Now, we are going to compare with a smaller spherical cap of $S_t$ that depends on $r$. 

First, observe that the geodesic ball $D({\bf n}, r) \subset (\s ^n , g_0)$ of radius $r$ centered at the north pole satisfies that $\partial D({\bf n}, r)$ is isometric to $\s ^{n-1} (\sin (r))$ and the mean curvature of $\partial D({\bf n}, r)$ with respect to the inward orientation is $\cot (r) $. 

Second, the horospherical metric associated to the geodesic sphere $S_t \subset \mathbb H ^{n+1}$ centered at the origin (in the Poincar\'e ball Model) of radius $t$ is just the dilated metric $\tilde g _t = e^{2\tilde \rho _t} g_0= e^{2t} g_0 $ and, from the representation formula \eqref{formula}, it is parametrized by 
$$ \tilde \phi _t (x) = (\cosh (t) , \sinh (t) \, x)  \text{ for all } x \in \s ^n .$$ 

In particular,  $$H_{\tilde g _t} (x) = e^{-t} \cot (r) \textrm{ for all } x \in \partial D ({\bf n}, r).$$

Now, let $P _r$ be the totally geodesic hyperplane in $\mathbb H ^{n+1}$ whose boundary at infinity coincides with the boundary of $D({\bf n} ,r)$, that is, $\partial _\infty P_r = \partial D ({\bf n}, r)$. Set $S^+_{r,t} = \tilde \phi _t (\overline{D({\bf n},r)})$. Hence, with the conditions above (as we have already done) we can check that 
$$ \tilde \phi _t (x) \in \mathcal H (x,t) \cap P_r({\rm arc}\sinh (-e^{-t}\cot (r))), \text{ for all } x \in \partial D ({\bf n}, r) $$and $S^+_{r,t} \subset \overline{P_r({\rm arc}\sinh (-e^{-t}\cot (r))) } $   

Denoting  by $\mathcal B (x, t)$ the open horoball determined by $\mathcal H (x,t)$ we observe that 
$$
\mathcal D (a) := P_r({\rm arc}\sinh (-e^{-t}\cot (r))) \setminus  \bigcup _{x \in \partial D({\bf n} ,r)} \mathcal B (x, t)
$$ 
is a closed ball in $P_r({\rm arc}\sinh (-e^{-t}\cot (r)))$ of radius $a>0$ depending on $r$ and $t$ and centered at $q_0 = P_r({\rm arc}\sinh (-e^{-t}\cot (r))) \cap \gamma (\R) $, where $\gamma$ is the complete geodesic in $\mathbb H ^{n+1}$ joining the south and north poles. Let $\bar a >0$ the unique positive number so that 
$$ \mathcal C (\bar a)  \cap P_r({\rm arc}\sinh (-e^{-t}\cot (r))) = \partial \mathcal D(a) \subset P_r({\rm arc}\sinh (-e^{-t}\cot (r))) ,$$where  $\mathcal C (\bar a)$ is the hyperbolic cylinder in $\mathbb H ^{n+1}$ of axis $\gamma$ and radius $\bar a$, i.e., those points at distance $\bar a$ from $\gamma$.

The exact value of $\bar a$ is not important. However it can be computed explicitly. The important observation is the following. Let $P_r({\rm arc}\sinh (-e^{-t}\cot (r))) ^-$ be the halfspace determined by $P_r({\rm arc}\sinh (-e^{-t}\cot (r)))$ containing the south pole at its boundary at infinity, then 
\begin{equation}\label{OutCylinder}
 \mathcal C (\bar a) \cap P_r({\rm arc}\sinh (-e^{-t}\cot (r)))^- \cap \mathcal H (x ,t) = \emptyset \text{ for all } x \in \partial D ({\bf n} ,r) .
\end{equation}

Let $\hat g_{t}$ be the restriction of $\tilde g _{t}$ to $\overline{D({\bf n},r)}$, i.e., $\left.\tilde g _{t}\right. _{|D({\bf n},r)} = \hat g_{t}$, and $\hat \rho _{t}$ the restriction of $\tilde \rho _{t}$ to $\overline{D({\bf n},r)}$. Then, it holds
\begin{equation}\label{BConditions}
\hat \rho _{t} = t \text { and } \frac{\partial \hat \rho _{t}}{\partial \nu} = 0 \text{ on } \partial D({\bf n},r) ,
\end{equation}where $\nu$ is the inward normal along $\partial D({\bf n},r)$.

After the proof of Theorem \ref{Th:rsmall} we will explain, geometrically, the necessity on the condition for the mean curvature.

\begin{theorem}\label{Th:rsmall}
Let $(f, \Gamma)$ be an elliptic data and let $g=e^{2\rho}g_0$,  $\rho \in C^2 (\overline{\mathbb S^n_+})$, be a supersolution to $(f,\Gamma)$ in the closed  hemisphere $\overline{\mathbb S^n_+}$, i.e., 
$$f(\lambda(p)) \geq 1, \, \, \lambda _g(p)\in\Gamma \text{ for all } p\in \mathbb{S}^n_+ .$$

Assume that the boundary $\partial \s ^n_+$ with respect to $g$ is umbilic with mean curvature $H_g\geq {\rm cot}(r)$ and isometric to  $\mathbb{S}^{n-1}(\sin r)$ for some $r \in (0,\pi/2)$, here $\mathbb{S}^{n-1}(\sin r)$ denotes the standard sphere of radius $\sin r$.

Then, there exists a conformal diffeomorphism $\Phi \in {\rm Conf}(\s^n)$ so that $(\overline{\mathbb S^n_+} , \Phi^* g )$ is isometric $D ({\bf n}, r)$, where $D ({\bf n}, r)$ is the geodesic ball in $\s ^n$ with respect to the standard metric $g_0$ centered at the north pole ${\bf n}$ of radius $r$.
\end{theorem}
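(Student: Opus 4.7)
The plan is to mimic the template of Theorem \ref{Th:Hemi}, replacing the totally geodesic hyperplane $P$ by $P_r$ and the half-sphere $S_t^+$ by the cap $S^+_{r,t}$, and using the hypothesis $H_g \geq \cot(r)$ to control the position of $\partial \Sigma_t$ relative to the solid cylinder $\mathcal C(\bar a)$. For the normalization, since $(\partial \s^n_+, g|_{\partial \s^n_+})$ is isometric to $\s^{n-1}(\sin r)$, Obata's theorem produces a conformal diffeomorphism of $\s^{n-1}$ realizing this isometry; extending it to $\Phi \in {\rm Conf}(\s^n)$ with $\Phi(\ov{\s^n_+}) = \ov{D({\bf n}, r)}$, I may replace $g$ by $(\Phi^{-1})^* g$ and work with $g = e^{2\rho} g_0$ on $\ov{D({\bf n}, r)}$ satisfying $\rho \equiv 0$ on $\partial D({\bf n}, r)$. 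The boundary is then totally umbilic with $H_g = \cot(r) - \partial \rho / \partial \nu$, so the hypothesis $H_g \geq \cot(r)$ is equivalent to $\partial \rho / \partial \nu \leq 0$ on $\partial D({\bf n}, r)$, where $\nu$ is the inward unit conormal with respect to $g_0$.

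\textbf{Horospherical lift and sliding.} By Lemma \ref{KeyLemma} fix $t > 0$ large so that $g_t = e^{2(\rho + t)} g_0$ has Schouten eigenvalues below $1/2$ and $\Sigma_t = \phi_t(\ov{D({\bf n}, r)})$ is a compact embedded horospherically concave hypersurface. A computation entirely analogous to Claim A shows that for each $x \in \partial D({\bf n}, r)$ one has $\phi_t(x) \in \mathcal H(x, t) \cap P_r({\rm arc}\sinh(-e^{-t} H_g(x)))$; combined with $H_g \geq \cot(r)$ and the geometric inclusion \eqref{OutCylinder}, this forces $\partial \Sigma_t$ to lie outside the open cylinder $\mathcal C(\bar a)$, with contact $\phi_t(x) \in \partial \mathcal C(\bar a)$ possible only at points where $H_g(x) = \cot(r)$, and in that case $\Sigma_t$ is orthogonal to $P_r({\rm arc}\sinh(-e^{-t}\cot(r)))$ at $\phi_t(x)$. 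Consider now the family $T_s(S^+_{r,t})$ of hyperbolic translates along $\gamma$. Since $T_s$ preserves $\mathcal C(\bar a)$, the boundary $\partial T_s(S^+_{r,t})$ stays on $\partial \mathcal C(\bar a)$ and the interior stays in the interior of $\mathcal C(\bar a)$; in particular, an interior point of $T_s(S^+_{r,t})$ and a point of $\partial \Sigma_t$ can never coincide. Let $s_0$ be the smallest $s$ with $T_s(S^+_{r,t}) \cap \Sigma_t \neq \emptyset$.

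\textbf{Comparison and rigidity.} Let $\hat g_{t,s_0} = e^{2\hat \rho_{t,s_0}} g_0$ denote the horospherical metric on $\s^n$ associated to the full translated sphere $S_{t,s_0} = T_{s_0}(S_t)$, and write $\hat \rho_{t,s_0}$ also for its restriction to $\ov{D({\bf n},r)}$. The analogue of Claim B, proved by the same linking-number argument, namely constructing a piecewise geodesic curve joining the south pole, $\gamma(s_0)$, a horospherical tangency point of $T_{s_0}(S^+_{r,t})$ and the corresponding point at infinity in $D({\bf n}, r)$, and exploiting that $\partial \Sigma_t$ is homotopic to $\partial D({\bf n}, r)$, yields $\rho_t \geq \hat \rho_{t, s_0}$ on $\ov{D({\bf n}, r)}$ with equality at the contact point. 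Since $f(\lambda_{g_t}) \geq e^{-t} = f(\lambda_{\hat g_{t, s_0}})$ pointwise, the strong maximum principle (Lemma \ref{SMP}) at an interior contact, or the Hopf lemma (Lemma \ref{HL}) at a boundary contact (where the orthogonality from the previous step yields $\partial_\nu(\rho_t - \hat \rho_{t, s_0}) = 0$), forces $\rho_t \equiv \hat \rho_{t, s_0}$ on $\ov{D({\bf n}, r)}$. Hence $g$ is the pullback of $g_0$ by the conformal diffeomorphism of $\s^n$ induced by $T_{s_0}$, which composed with $\Phi$ furnishes the isometry of $(\ov{\s^n_+}, g)$ with $\ov{D({\bf n}, r)}$ claimed in the theorem.

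\textbf{Main obstacle.} The delicate step is the linking-number adaptation of Claim B: with $r < \pi/2$, $\partial \Sigma_t$ lies outside the cylinder $\mathcal C(\bar a)$ rather than on a totally geodesic hyperplane, so the three auxiliary arcs $\beta_1, \beta_2, \beta_3$ must be chosen so that the forced transverse intersection of $\Sigma_t$ with $T_{s_0}(S^+_{r,t})$ lies in the interior of $\beta_2$, contradicting the first-contact property. Careful case analysis is also required to eliminate the possibility that an interior point of $\Sigma_t$ first meets a boundary point of $T_{s_0}(S^+_{r,t})$ on $\partial \mathcal C(\bar a)$ itself, which is handled by pushing the orthogonality information from the boundary Claim A into the Hopf-lemma step.
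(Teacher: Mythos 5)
Your argument mirrors the paper's proof of Theorem \ref{Th:rsmall} essentially step for step: the Obata normalization to $\overline{D(\mathbf n,r)}$ with $\rho\equiv 0$ and $\partial\rho/\partial\nu\le 0$ on the boundary, the lift to $\Sigma_t$, the replacement of the totally geodesic $P$ and hemisphere cap by $P_r(\cdot)$ and $S^+_{r,t}$, the cylinder $\mathcal C(\bar a)$ controlling $\partial\Sigma_t$, and the sliding/comparison via Claim B, Lemma \ref{SMP}, and Lemma \ref{HL}. The only cosmetic imprecision is that at a boundary--boundary first contact the geometry of $\mathcal C(\bar a)\cap P_r(\cdot)$ forces $s_0=0$ (so $\hat\rho_{t,s_0}\equiv t$), which you should say explicitly before invoking Hopf's lemma; otherwise the proof is the paper's proof.
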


\begin{proof}[Proof of Theorem \ref{Th:rsmall}]
Using Obata's Theorem in this case, up to a conformal diffeomorphism, we can assume that $ g = e^{2\rho} g_0 $ is defined on $\overline{D({\bf n}, r)}$ and it is so that $\rho = 0$ on $\partial D ({\bf n} ,r)$. Moreover, the mean curvature of $\partial D ({\bf n} ,r)$ with respect to $g$ is given by
\begin{equation}\label{MB2}
{\rm cot}(r)\leq H_g:= -e^{-\rho}\frac{\partial \rho}{\partial \nu} + \cot(r) \text{ on } \partial D ({\bf n} ,r) .
\end{equation}

Now, as we have done above, we fix $t>0$ such that  the eigenvalues of the Schouten  tensor of  $g_t=e^{2(\rho +t)}g_0$ satisfy $\lambda^t_i(x)<1/2$, for all $x\in \overline{\mathbb S^n_+}$ and we denote by
 $\Sigma _t=\phi _t(\mathbb S^n_+) \subset \mathbb H^{n+1} \subset \mathbb L ^{n+2}$  the compact embedded horospherically concave hypersurface with boundary given by the representation formula (\ref{formula}) associated to $\rho _t =\rho +t$. In particular, $\rho = t$ along $\partial D ({\bf n},r)$.

As we have seen above, we have $\phi _t(x) \in \mathcal H(x,\rho_t(x))$, where $\mathcal H(x,\rho_t(x))$ is the horosphere whose point at infinity is  $x$ and distance to the origin is $t$. Moreover, the mean curvature $H_g (x)$ measures the equidistant where $\phi _t (x )$ is contained, that is 
$$ \phi _t(x) \in  \mathcal H(x,\rho_t(x)) \cap  P_r({\rm arc}\sinh (-e^{-t}H_g(x))).$$ In particular, $ \partial \Sigma _t \subset  \overline{P _r({\rm arc}\sinh (-e^{-t}\cot (r)))^- }$. Hence,

\begin{quote}
{\bf Claim:} {\it $\partial \Sigma _t$ lies outside the interior of $\mathcal C (\bar a)$. Moreover, $\phi _t (x) \in \partial \Sigma _t \cap \mathcal C (\bar a) $ for some $x \in \partial D({\bf n} ,r)$ if, and only if, $\dfrac{\partial \rho _t}{\partial \nu} =0$ at $x \in \partial D({\bf n} ,r)$.}
\end{quote}
\begin{proof}[Proof of Claim]
From \eqref{OutCylinder}, the boundary $\Sigma _t$ lies outside the interior of $\mathcal C (\bar a)$. Moreover, $\partial \Sigma _t$ touches $\mathcal C (\bar a)$ at $\phi _t (x)$ for some $x \in \partial D({\bf n} ,r)$ if, and only if, $H_g (x)  = \cot(r)$ and, from \eqref{MB2}, this is equivalent to $\frac{\partial \rho _t}{\partial \nu} =0$ at $x \in \partial D({\bf n} ,r)$. This finishes the proof of Claim.
\end{proof}

Now, consider the metric $\hat g _t = e^{2 \hat \rho _t}$ on $D({\bf n},r)$ defined above and satisfying \eqref{BConditions}. Now, we only have to compare $\rho _t$ and $\hat \rho _t$ the same way we did in Theorem \ref{Th:Hemi} and we conclude that $\rho _t \equiv \hat \rho _t$ on $D({\bf n} ,r)$. This proves the theorem.

\end{proof}

The condition on the mean curvature is fundamental to ensure that $\partial \Sigma _t$ does not touch the interior of $\mathcal C(\bar a)$. If, at some point $x\in \partial D({\bf n}, r)$, the mean curvature were smaller than $\cot (r)$, the point $\phi _t (x)$ might be in the interior of $\mathcal C (\bar a)$. Hence, when we compare $\Sigma _t$ and the spherical cap, the first contact point could be an interior point of the spherical cap and a boundary point of $\partial \Sigma _t$ and hence, we can not apply the maximum principle.

Finally, we establish our main result in this section:

\begin{theorem}\label{Th:General}
Let $p_i \in \s ^n$ and $\epsilon _ i >0 $, $i=1,\ldots , k$, be so that the closed geodesic balls $\overline{D(p_i,\epsilon _i)}\subset \s^n$ are pairwise disjoint. Set $\Omega := \s^n \setminus \bigcup _{i=1}^k D(p_i,\epsilon _i)$ and let $\Lambda \subset \Omega$ be a closed subset with empty interior.

Let $(f, \Gamma)$ be an elliptic data and let $g=e^{2\rho}g_0$,  $\rho \in C^2 (\overline{\Omega}\setminus \Lambda )$, be a supersolution to $(f,\Gamma)$ in  $\Omega \setminus \Lambda $, i.e., 
$$f(\lambda(p)) \geq 1, \, \, \lambda _g(p)\in\Gamma \text{ for all } p\in \Omega \setminus \Lambda  .$$

Assume that $g$ is complete in $\overline{\Omega}\setminus \Lambda $ and the Schouten tensor of $g$ is bounded.

Assume that each boundary component $\partial D(p_i,\epsilon _i)$ with respect to $g$ is umbilic with mean curvature $H_g\geq {\rm cot}(r)$ and isometric to  $\mathbb{S}^{n-1}(\sin (r))$ for some $r \in (0,\pi/2]$, here $\mathbb{S}^{n-1}(\sin (r))$ denotes the standard sphere of radius $\sin (r)$.

Then, there exists a conformal diffeomorphism $\Phi \in {\rm Conf}(\s^n)$ so that $(\overline{\Omega}\setminus \Lambda , \Phi^* g )$ is isometric $\overline{D ({\bf n}, r)}$, where $D ({\bf n}, r)$ is the geodesic ball in $\s ^n$ with respect to the standard metric $g_0$ centered at the north pole ${\bf n}$ of radius $r$.
\end{theorem}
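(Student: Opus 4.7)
Following the strategy of Theorems \ref{Th:Hemi} and \ref{Th:rsmall}, the plan is to reduce to a single spherical cap comparison in $\mathbb H^{n+1}$. Fix one boundary component, say $\partial D(p_1,\epsilon_1)$. Since it is umbilic with respect to $g$ and its induced metric is isometric to $\s^{n-1}(\sin r)$, Obata's theorem produces a conformal diffeomorphism of $\s^n$ carrying $\partial D(p_1,\epsilon_1)$ onto $\partial D({\bf n}, r)$. After pulling back $g$ by this diffeomorphism and relabeling, I may assume $p_1 = {\bf n}$, $\epsilon_1 = r$, and $\rho\equiv 0$ on $\partial D({\bf n},r)$; the mean curvature hypothesis then reads $\partial_\nu \rho\le 0$ on that component.

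\textbf{Horospherical lift.} Using the uniform bound on the Schouten tensor, I choose $t>0$ large enough that the dilated metric $g_t=e^{2(\rho+t)}g_0$ has Schouten eigenvalues strictly less than $1/2$ throughout $\overline{\Omega}\setminus\Lambda$, and apply the representation formula \eqref{formula} to obtain the horospherically concave hypersurface $\Sigma_t=\phi_t(\overline{\Omega}\setminus\Lambda)\subset \mathbb H^{n+1}$. Completeness of $g$ up to $\Lambda$, combined with the Schouten bound, ensures that $\Sigma_t$ is properly immersed in $\mathbb H^{n+1}$: the finite boundary $\partial \Sigma_t=\bigcup_i \phi_t(\partial D(p_i,\epsilon_i))$ consists of $C^1$ spheres, while $\Sigma_t$ accumulates only at $\Lambda\subset \s^n=\partial_\infty\mathbb H^{n+1}$. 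As in Theorem \ref{Th:rsmall}, the component $\phi_t(\partial D({\bf n},r))$ lies on the equidistant $P_r(\mathrm{arcsinh}(-e^{-t}\cot r))$, outside the hyperbolic cylinder $\mathcal C(\bar a)$ around $\gamma$, with contact on $\mathcal C(\bar a)$ exactly at points where $\partial_\nu\rho=0$. Each further component $\phi_t(\partial D(p_i,\epsilon_i))$, $i\geq 2$, satisfies an analogous constraint with its own axial cylinder $\mathcal C_i$.

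\textbf{Moving cap from infinity.} Translate the model spherical cap $S_{r,t}^+$ by the one-parameter family $T_s$ along $\gamma$. For $s\gg 0$, the asymptotic boundary $\Phi_s(\overline{D({\bf n},r)})$ on $\s^n$ shrinks to a tiny disk around ${\bf n}$ contained in $\overline{D({\bf n},r)}$, which is disjoint from $\overline{\Omega}\setminus\Lambda$; combined with compactness of $T_s(S_{r,t}^+)$ and properness of $\Sigma_t$, this yields $T_s(S_{r,t}^+)\cap\Sigma_t=\emptyset$. Decreasing $s$, let $s_0$ be the first instant of contact. Properness rules out escape to $\Lambda\subset \partial_\infty \mathbb H^{n+1}$, so $s_0$ is finite and the contact occurs at a point of $\Sigma_t\cup\partial\Sigma_t$.

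\textbf{Classification of first contact and conclusion.} An interior first contact with $\Sigma_t$ is forbidden by the strong maximum principle (Lemma \ref{SMP}) applied to the horospherical support functions $\rho_t$ and $\Phi_{s_0}^\ast \tilde\rho_t$, since $g_t$ is a supersolution of $(f,\Gamma)$ while the cap's horospherical metric saturates the equation, exactly as in the proof of Theorem \ref{Th:Hemi}. Contact at a boundary component $\phi_t(\partial D(p_i,\epsilon_i))$ with $i\geq 2$ is ruled out by the geometric separation between $\partial T_s(S_{r,t}^+)\subset \mathcal C(\bar a)$ and $\phi_t(\partial D(p_i,\epsilon_i))$ lying outside the disjoint cylinder $\mathcal C_i$, together with the transversal linking-number obstruction used in Claim B of Theorem \ref{Th:Hemi}. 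Hence the first contact occurs along $\phi_t(\partial D({\bf n},r))$, which by the analogue of Claim~A forces $s_0=0$ and $\partial_\nu\rho_t=0$ at a boundary point. Hopf's lemma (Lemma \ref{HL}) then gives $\rho_t\equiv t$ on $\overline{D({\bf n},r)}$, and a continuation argument (strong maximum principle applied across a common interior piece) yields $\Sigma_t\equiv S_{r,t}^+$ as a whole. This identifies $\Omega\setminus\Lambda$ with $D({\bf n},r)$, so $k=1$ (with $p_1$ the south pole, $\epsilon_1=\pi-r$) and $\Lambda=\emptyset$, giving the desired isometry.

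\textbf{Main obstacle.} The delicate step is the exclusion of first contact with boundary components other than $\phi_t(\partial D({\bf n},r))$: this demands a careful description of where each $\phi_t(\partial D(p_i,\epsilon_i))$ sits in $\mathbb H^{n+1}$ relative to the moving cylinder $\mathcal C(\bar a)$ and a suitable generalization of Claims A--B. A secondary technical point is verifying that the horospherical lift $\Sigma_t$ is honestly proper in $\mathbb H^{n+1}$, which is where the completeness-plus-bounded-Schouten hypothesis is essential.
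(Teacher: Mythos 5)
The step that breaks down is your exclusion of a first contact between an interior point of the moving cap $T_s(S^+_{r,t})$ and a boundary point of $\partial\Sigma_t$ lying on some \emph{other} component $\phi_t(\partial D(p_i,\epsilon_i))$, $i\geq 2$. You invoke ``geometric separation'' between $\partial T_s(S^+_{r,t})\subset\mathcal C(\bar a)$ and $\phi_t(\partial D(p_i,\epsilon_i))$ together with the linking-number argument from Claim B. But the constraint that $\partial T_s(S^+_{r,t})$ lies on $\mathcal C(\bar a)$ tells you nothing about the location of the cap's \emph{interior} points, which is where the dangerous contact would occur; and the cylinders $\mathcal C_i$ associated to the other boundary components are built around different geodesic axes (through $\pm p_i$, not through $\pm\mathbf n$), so there is no a priori separation between $\mathcal C(\bar a)$ and $\phi_t(\partial D(p_i,\epsilon_i))$. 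The linking-number obstruction in Claim B serves a different purpose — it rules out an interior point of the \emph{domain} where the support functions would cross — and does not address boundary contact at a second component.

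The paper handles this step with a different and more robust mechanism, which your proposal omits: one first takes $t$ large enough so that $\Sigma_t$ is \emph{locally convex} with respect to its canonical orientation $\eta_t$. At a hypothetical first tangency point $q=\phi_t(x)$ with $x\in\partial D(p_i,\epsilon_i)$, $i\geq 2$, the mean curvature bound $H_g(x)\geq 0$ (resp.\ $\geq\cot r$) forces $\eta_t(q)$ to point into the convex halfspace $\overline{Q^+}$ determined by the totally geodesic hyperplane $Q$ with $\partial_\infty Q=\partial D(p_i,\epsilon_i)$, whereas the convexity of the moving cap forces its normal $\tilde\eta_{t,s}(q)$ to point into $Q^-$. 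At a first contact point the tangent hyperplanes agree, so the only possibility is $\eta_t(q)=-\tilde\eta_{t,s}(q)$; but then two locally convex hypersurfaces sharing a tangent hyperplane with opposite orientations would have to lie on opposite sides of it, meaning the cap approaches $\Sigma_t$ from its concave side — impossible for a first contact. This is where the hypothesis $H_g\geq\cot r\geq 0$ on \emph{every} boundary component is actually used, and it is the ingredient your cylinder/linking argument cannot replace. The rest of your outline (Obata normalization, horospherical lift via completeness plus bounded Schouten, maximum principle at interior contact, Hopf lemma at the distinguished boundary component) matches the paper's strategy.
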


The condition on $\Lambda$ having empty interior is superfluous. Under the conditions above, following ideas contained in \cite{BEQ}, one can prove that $\Lambda $ must have empty interior. 

After the proof of Theorem \ref{Th:General} we will explain the necessity of $H_g \geq 0$ when $(\partial \s ^n _+ ,g)$ is isometric to $\s ^{n-1}$ in the case of multiple boundary components, in contrast to Theorem \ref{Th:Hemi}.

\begin{proof}[Proof of Theorem \ref{Th:General}]

Since $|{\rm Sch}_g|<+\infty$ and $g$ is a complete metric, following the results in \cite{BEQ} (see also \cite{BQZ}), there exists $t>0$ such that the horospherically concave hypersurface associated 
\[
\Sigma_t = \phi_t(\Omega\setminus \Lambda) \subset \mathbb{H}^{n+1}
\]
is properly embedded with boundary and $\partial _{\infty}\Sigma_t = \Lambda$. Without loss of generality we can assume that $\Sigma _t$ is locally convex with respect to the canonical orientation $\eta _t$ by taking $t$ big enough.

Observe that, up to a conformal diffeomorphism $\Phi \in {\rm Conf} (\mathbb S^n)$, we can assume that one connected component of $\partial \Omega$ is $\partial \mathbb{S}^{n}_+$. Consider the case where $(\partial \Omega, g)$ is isometric to $(\mathbb{S}^{n-1},g_0)$. The case where  $(\partial \Omega, g)$ is isometric to $(\partial \mathbb{D}(r),g_0)$ is analogous. Observe that at the beginning of Theorem \ref{Th:Hemi} we did a conformal transformation to ensure that $\rho = 0$ along $\partial \mathbb{S}^{n}_+$. We can do this to ensure $\rho = 0$ along one connected component of the boundary (of course, not all of them). We assume $\Gamma_1=\partial \mathbb{S}^{n}_+$ has this property. Observe that after applying this conformal diffeomorphism we can assume $\Phi (\Omega) \subset \mathbb{S}^{n}_+$. Now consider the half-sphere $S^+_t\subset P^+$ as in the Theorem \ref{Th:Hemi}. We only need to prove that $S^+_t$ does not touch any other boundary component. 

As we did in Theorem \ref{Th:Hemi}, consider the hyperbolic translation $T_s : \mathbb H ^{n+1} \to \mathbb H ^{n+1}$ and set $S^+_{t,s} = T_s (S^+_t)$. Then, there exists $s_0 <0$ so that $\Sigma _t \cap S^+_{t,s} = \emptyset$ for all $s \geq s_0$. Then, we increase $s$ to $0$ up to the first contact point with $\Sigma _t$. If this first contact point happens either at interior points or at boundary points for $s=0$, then $\Sigma _t$ equals $S^+_t$ by the maximum principle as we did in Theorem \ref{Th:Hemi}.

Therefore, we only must show that the first contact point does not occur at an interior point of $S^+ _{t,s}$, for some $s\in [s_0 ,0]$, and a boundary point of $\Sigma _t$. Assume this happens, and let $\Gamma_2$ be the other boundary component of $\partial\Omega\setminus \Gamma_1$ that $S^+_{t,s}$ touch.

Let $p \in \s ^n _+$ and $\epsilon >0$, $\overline{D(p,\epsilon)}\subset \s ^n _+$, so that $\Gamma _2 = \phi _t (\partial D(p,\epsilon))$. Let $P$ and $Q$ be the totally geodesic hyperplanes in $\mathbb H ^{n+1}$ whose boundaries at infinity are 
$$ \partial _\infty P = \partial \s ^n_+ \text{ and } \partial _\infty Q = \partial D(p, \epsilon) . $$

Let $Q^+ $ be the halfspace determined by $Q$ whose boundary at infinity contains $p \in \partial _\infty Q^+$. Let $\phi _t (x) =q \in \Gamma _2 \cap S^+_{t,s}$ be a first contact point. Let $\eta _t$ and $\tilde \eta _{t,s}$ be the canonical orientation of $\Sigma _t$ and $S^+_{t,s}$ respectively. Then, $\phi _t (x) \in Q({\rm arc}\sinh (e^{-t}H_g(x)))$, where $Q({\rm arc}\sinh (e^{-t}H_g(x)))$ is the equidistant to $Q$ at distance ${\rm arc}\sinh (e^{-t}H_g(x))$ contained in $Q^+$.

Since we are assuming that the mean curvature $H_{g}$ is non-negative along $\partial D(p,\epsilon)$ we have that $\eta _t (q)$ points towards $\overline{Q^+}$, it could belong to the tangent bundle of $Q$ if $H_g (x)=0$, $q = \phi _t (x)$. Now, since $S^+_{t,s}$ is convex with respect to the canonical orientation $\tilde \eta _{t,s}$, then $\tilde \eta _{t,s} (q)$ points towards $Q^-$. Since we are assuming that $q$ is the first contact point, the only possibility is that $\eta _t (q) = - \tilde \eta _{t,s} (q)$. However, if this were the case, since $\Sigma _t$ and $S^+ _{t,s}$ are locally convex and their tangent hyperplanes coincide, they must be (locally) in opposite sides of the tangent hyperplane, in other words, $S^+_{t,s}$ is approaching by the concave side of $\Sigma _t$, which is a contradiction. Hence, in any case, the first contact point does not occur at an interior point of $S^+ _{t,s}$, for some $s\in [s_0 ,0]$, and a boundary point of $\Sigma _t$.

Thus, this finishes the proof of Theorem \ref{Th:General}


\end{proof}

Observe that the condition $H_g \geq 0$ is essential in Theorem \ref{Th:General}, in contrast to Theorem \ref{Th:Hemi}. The reason is that this condition gives us a direction of the canonical orientation $\eta _t$ at the contact point. If the mean curvature at some point were negative, both $\eta _t$ and $\tilde \eta _{t,s}$ point toward the same halfspace $Q^-$ at the contact point $q$, and we can not achieve a contradiction.

\section{Proof of Theorem A}

Now, we are ready to prove our main result. For simplicity, we divide the proof into two cases.

\subsection{$M$ is simply-connected}
\begin{proof}
First, we prove our Theorem A under the condition that $M$ is simply-connected. In this case, there exists a developing map $\Psi : M \rightarrow \s ^n$. Since $\partial M$ is umbilic, and to be umbilic is a conformal invariant, the image of $\partial M$ must be umbilic in $\s ^n$. Hence, $\partial M$ is contained in a hypersphere $\mathcal S \subseteq \s ^n$. Note that, in fact, $\Psi _{|_{\partial M}}: \partial M \rightarrow \mathcal S$ is a diffeomorphism. Composing it with a conformal diffeomorphism of $\s ^n$, if necessary, we can assume that $\mathcal S$ is the equator $\partial \s _+^n = \{x \in \s ^n\,; \,\,x_{n+1} = 0\}$. Now, consider the double manifold $\hat{M}=M\bigcup\limits_{\partial M}(-M).$ We are writing $-M$ for the second copy of $M$ in $\hat{M}$ in order to distinguish it from $M$ itself. We extend $\Psi$ to a map $\hat{\Psi}: M\rightarrow \s ^n$ in a natural way: we write $\Psi = (\Psi_1 , \ldots , \Psi_{n+1} )$ and set
$$\begin{displaystyle}
  \hat{\Psi}(x) := 
  \begin{cases}
    \Psi(x) & \text{if } x\in M \\
    (\Psi_1(x), \ldots , \Psi_{n+1}(x),-\Psi_{n+1}(x) ), &  \text{if } x\in -M
  \end{cases}
\end{displaystyle}$$
Then $\hat{\Psi}$ is well-defined and continuous because $\Psi_{n+1}(x) = 0$ for $x \in \partial M$. Moreover, it is a local homeomorphism. It follows that $\hat{\Psi}$ is a homeomorphism and hence $\Psi$ is injective. Furthermore, the image is either $\s ^n_+$ or $\s ^n _-$. Let $\{{\bf s}, {\bf n} =-{\bf s}\}$ be a pair of antipodal points. By composing $\Psi$ with a conformal diffeomorphism of $\s^n$, we may assume that the image of $\Psi$ is $\overline{\s ^n _+}$. 

Now, we can pushforward the metric $g$ on $M$ to $\overline{\s^n _+}$ via $\Psi$, $\tilde g = (\Psi ^{-1})^*g$, and we obtain a conformal metric to standard metric on the sphere satisfying that the boundary $\partial \s ^n_+$ with respect to $\tilde g$ is umbilic with mean curvature $H_{\tilde g}\geq {\rm cot}(r)$ and isometric to  $\mathbb{S}^{n-1}(\sin r)$ for some $r \in (0,\pi/2]$, here $\mathbb{S}^{n-1}(\sin r)$ denotes the standard sphere of radius $\sin r$. Therefore, either Theorem \ref{Th:Hemi} if $r =\pi/2$ (in this case we do not need to assume $H_g \geq 0$) or Theorem \ref{Th:rsmall} if $r\in (0, \pi/2)$ imply that $\tilde g$ is isometric (up to a conformal diffeomorphism) to $\overline{D({\bf n},r)}$. This concludes the proof of Theorem A in the simply-connected case.
\end{proof}

\subsection{$M$ is not simply-connected}

In this case, we will use Theorem \ref{SP}. Then, there exists an injective conformal diffeomorphism $\Psi : M \to \Omega \setminus \Lambda $ where $\Omega = \Omega (\epsilon_i , p_i ) := \s ^n\backslash \left( \bigcup\limits_{i} D(p_i ,\epsilon_i)\right)$, $D(p_i ,\epsilon_i)$ are geodesic balls in $\s^n$ centered at $p_i$ of radius $\epsilon _i$ with disjoint closures and $\Lambda$ is a closed subset of Hausdorff dimension at most $\frac{n-2}{2}$.

Hence, as we did above, we can push forward the metric on $M$ to $\Omega \setminus \Lambda $ as $\tilde g = (\Psi ^{-1}) ^*g$, $\tilde g$ is conformal to the standard metric on the sphere. This metric is complete (cf. \cite[Section 2]{LiNg}) and its Schouten tensor is bounded, since the Schouten tensor of $g$ is bounded in $M$. Moreover, the boundary conditions on $g$ imply that each boundary component $\partial D(p_i,\epsilon _i)$ with respect to $\tilde g$ is umbilic with mean curvature $H_{\tilde g}\geq {\rm cot}(r)$ and isometric to  $\mathbb{S}^{n-1}(\sin (r))$ for some $r \in (0,\pi/2]$, here $\mathbb{S}^{n-1}(\sin (r))$ denotes the standard sphere of radius $\sin (r)$.

Therefore, Theorem \ref{Th:General} implies that there exists a conformal diffeomorphism $\Phi \in {\rm Conf}(\s^n)$ so that $(\overline{\Omega}\setminus \Lambda , \Phi^* \tilde g )$ is isometric $D ({\bf n}, r)$, where $D ({\bf n}, r)$ is the geodesic ball in $\s ^n$ with respect to the standard metric $g_0$ centered at the north pole ${\bf n}$ of radius $r$. In particular, $\Lambda = \emptyset$ and the number of connected components at the boundary is one. This implies that $M$ is simply connected via $\Psi$. This concludes the proof of Theorem A.

\section{Rigidity for hypersurfaces in $\mathbb H ^{n+1}$}

Now, we will see how our results on Section \ref{hemisphere} apply to  hypersurfaces $\Sigma$ in $\Hy ^{n+1}$. We are going to establish here a simplified version of that we could, but which is geometrically more appealing. 

First, we define the geometric setting. Let $P_i \subset \Hy ^{n+1}$, $i=1,\ldots, m$, be pairwise disjoint totally geodesic hyperplanes and let $\mathcal O (m)$ be the connected component of $\Hy^{n+1}\setminus \bigcup _{i=1}^m P_i$ whose boundary is $\partial \mathcal O (m) = \bigcup _{i=1}^m P_i$. Fix $r\geq 0$ and denote by $P_i(r)$ the equidistant hypersurface $P_i$ at distance $r$ so that $P_i (r) \subset \Hy^{n+1}\setminus \mathcal O$. Assume that $P_i (r)$, $i=1, \ldots , m$, are pairwise disjoint and denote  by $\mathcal O (m, r)$ the connected component of $\Hy^{n+1}\setminus \bigcup _{i=1}^m P_i (r)$ whose boundary is $\partial \mathcal O (m, r) = \bigcup _{i=1}^m P_i (r)$. Observe that the boundary at infinity  $\overline{\Omega(m)} := \partial _\infty \mathcal O (m,r) \subset \s ^n $ satisfies that  $\partial \Omega (m)= \bigcup _{i=1}^m \partial D(p_i ,\epsilon _i)$, for certain $p_i \in \s^n$ and $\epsilon _i >0$. Moreover, we orient each $P_i$ so that the normal $N_i$ along $P_i$ points into $\mathcal O (m,r)$. A domain $\mathcal O (m,r)$ in the above conditions is called a {\it $(m,r)-$domain}.

Second, we define how the hypersurface $\Sigma$ sits into a $(m,r)-$domain. Let $\Sigma \subset \Hy ^{n+1}$ be a properly embedded hypersurface with boundary. We say that $\Sigma $ {\it sits into a $(m,r)-$domain}, denoted by $\Sigma \subset \mathcal O (m,r)$, if 
\begin{itemize}
\item $\Sigma \setminus \partial \Sigma \subset \mathcal O (m,r)$, 
\item $\partial \Sigma = \bigcup _{i=1}^m \mathcal S _i$, where each $\mathcal S _i $ is homeomorphic to $\s ^{n-1}$ and $\mathcal S _i \subset P _i (r)$, 
\item let $\mathcal D _i \subset P_i$ the domain bounded by $\mathcal S _i$ in $P_i (r)$, the orientation $\eta$ of $\Sigma$ is the one pointing into the domain $W \subset \Hy ^{n+1}$ bounded by $\Sigma \cup \left( \bigcup_{i=1}^m \mathcal D _i \right)$, and 
\item $\partial _\infty \Sigma \subset \Omega (m)$.
\end{itemize}

Third, we set the type of elliptic inequality the hypersurface will satisfy. We recall  the definition of elliptic data for a hypersurface in $\mathbb H ^{n+1}$ (cf. \cite[Section 4]{BEQ} and references therein). Let 
$$\Gamma ^* _n =\{ (x_1, \ldots , x_n) \in \real ^n \, : \,\, x_i >1 \}$$and
$$\Gamma ^* _1 =\{ (x_1, \ldots , x_n) \in \real ^n \, : \,\, \sum _{i=1}^nx_i >n  \}.$$

Consider a symmetric function $\mathcal W (x_1 , \ldots , x_n)$ with $\mathcal W (1, \ldots , 1) = 0$ and $\Gamma ^*$ an open connected component of 
$$ \{ (x_1, \ldots , x_n) \in \real ^n \, : \,\, \mathcal W (x_1, \ldots ,x_n ) >0  \}. $$

We say that $(\mathcal W , \Gamma ^* , \kappa _0) $, $\kappa_0 >0$, is an elliptic data if they satisfy

\begin{enumerate}
  \item $\Gamma ^*_n \subset \Gamma ^* \subset \Gamma ^* _1$,
  \item $\mathcal W$ is symmetric,
  \item $\mathcal W>0$ in $\Gamma^*$,
  \item $\mathcal W |_{\partial\Gamma^*}=0$,
  \item $\dfrac{\partial \mathcal W}{\partial x_i}>0$ for all $ i=1\ldots, n.$,
  \item $\mathcal W (\kappa_0, \ldots , \kappa_0) =1$.
\end{enumerate}

Then, given an elliptic data $(\mathcal W, \Gamma ^* ,\kappa _0)$ we say that an oriented hypersurface $\Sigma \subset \Hy^{n+1}$ is a {\it supersolution} to $(\mathcal W, \Gamma ^* ,\kappa _0)$ if 
$$ \mathcal W (k (p)) \geq 1, \, \, k (p) \in \Gamma ^* \text{ for all } p\in \Sigma ,$$where $k(p):= (k_1 (p), \ldots, k_n (p))$ is composed by the principal eigenvalues of $\Sigma$ at $p\in \Sigma$ with respect to the chosen orientation. 

We have already established the geometric configuration. In order to state appropriately our main result, we need to introduce some notation. 

Fix $r \geq 0$ and $\kappa _0 >1$. Let $S_p(\kappa _0)$ be the totally umbilic geodesic sphere centered at  $p\in \Hy^{n+1}$ whose principal curvatures (with respect to the inward orientation) are equal to $\kappa _0$. Let $P(r)$ be a equidistant hypersurface to a totally geodesic hyperplane $P$. Denote by $P(r)^+$ the convex component of $\Hy^{n+1}\setminus P$. Let $p_r \in \Hy ^{n+1}$ be a point so that $S(\kappa_0,r)^+ := S_{p_r} (\kappa_0) \cap P(r)^+$ makes a constant angle $\alpha (r)={\rm arc}\cos\left( -\frac{r}{\sqrt{1+r^2}}\right)$, the angle here is measure between the inward normal along the geodesic sphere and the normal along $P(r)$ pointing into the convex side. 

\begin{definition}
We say that $\Sigma$ is a {\it $(\kappa_0,r)-$spherical cap} if $\Sigma := S(\kappa_0,r)^+$, up to an isometry of $\Hy^{n+1}$.
\end{definition}

Recall that the inradius of a closed embedded hypersurface $\mathcal S$ in $P(r)$, denoted by ${\rm InRad}(\mathcal S, P(r))$, is the radius of the biggest geodesic ball in $P(r)$ contained in the domain bounded by $\mathcal S$ in $P(r)$. Then, we set $\imath (\kappa _0 ,r) := {\rm InRad}(\partial S(\kappa _0 ,r)^+ ,P(r)) >0$.

It is clear that $(\kappa_0,r)-$spherical caps will be the model hypersurfaces to compare with in the next result.

\begin{theorem}\label{capillar}
Fix $m \in \mathbb N \cup \set{0} $ and $r \geq 0$. Consider a $(m,r)-$domain $\mathcal O (m,r)$ and let $\Sigma \subset \mathcal O (m,r)$ be a properly embedded hypersurface sitting on it. 

Let $(\mathcal W, \Gamma ^* ,\kappa _0)$ be an elliptic data and assume that $\Sigma$ is a supersolution to $(\mathcal W, \Gamma ^* ,\kappa _0)$. Assume that along the boundary $\Sigma$ satisfies:
\begin{itemize}
\item $ \meta{\eta  (x)}{N_i(x)} \leq  -\frac{r}{\sqrt{1+r ^2}}$ for each $x\in \mathcal S_i $.
\item ${\rm InRad}(S_{i_0} ,P_{i_0}(r)) \geq \imath (\kappa _0 ,r) $ for some $i_0 \in \set{1,\ldots ,m}$.
\end{itemize}

Then, $\Sigma $ is, up to an isometry of $\Hy^{n+1}$, a $(\kappa_0,r)-$spherical cap.
\end{theorem}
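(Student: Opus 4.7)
The plan is to prove Theorem \ref{capillar} by a direct Alexandrov-type moving hypersurface argument in $\Hy^{n+1}$, in the spirit of Theorems \ref{Th:Hemi}, \ref{Th:rsmall} and \ref{Th:General}. The comparison objects are the $(\kappa_0,r)$-spherical caps $S(\kappa_0,r)^+$: each satisfies $\mathcal W(\kappa_0,\ldots,\kappa_0)=1$ with equality, meets its supporting equidistant $P(r)$ at the precise angle $\alpha(r)=\arccos(-r/\sqrt{1+r^2})$, and its boundary curve in $P(r)$ has inradius exactly $\imath(\kappa_0,r)$. The idea is to slide such a cap toward $\Sigma$ through hyperbolic isometries until a first point of contact appears, and then invoke the strong maximum principle and the Hopf lemma at that point.

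I would first fix the distinguished boundary component $\mathcal S_{i_0}$. The inradius hypothesis ${\rm InRad}(\mathcal S_{i_0},P_{i_0}(r))\geq \imath(\kappa_0,r)$ permits the choice of a closed geodesic ball $\mathcal B\subset P_{i_0}(r)$ of radius $\imath(\kappa_0,r)$ lying inside the domain of $P_{i_0}(r)$ bounded by $\mathcal S_{i_0}$. Position a model cap $S_0$ with $\partial S_0=\partial \mathcal B$, which is the unique such cap thanks to the common angle $\alpha(r)$ along $P_{i_0}(r)$, and consider the family $\{T_s(S_0)\}_{s\in \R}$ obtained from hyperbolic translations along the geodesic orthogonal to $P_{i_0}$ through the center of $\mathcal B$. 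Proper embeddedness of $\Sigma$ provides an initial value $s_*$ for which $T_{s_*}(S_0)\cap \overline{\Sigma}=\emptyset$; letting $s$ evolve monotonically back to $0$, I denote by $s_0$ the first instant of contact.

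At the first contact point $q$ the analysis splits into an interior and a boundary case. If $q$ is interior to both $T_{s_0}(S_0)$ and $\Sigma$, the two hypersurfaces are tangent at $q$ with the same orientation and $\Sigma$ lies locally on the concave side of $T_{s_0}(S_0)$; hence the principal curvatures of $\Sigma$ at $q$ dominate, in the $\Gamma^*$ sense, those of the cap. The supersolution inequality $\mathcal W(k(\Sigma))\geq 1=\mathcal W(\kappa_0,\ldots,\kappa_0)$ together with the strong maximum principle (Lemma \ref{SMP}) then forces local coincidence, which propagates by connectedness and boundary-matching to identify $\Sigma$ globally with $T_{s_0}(S_0)$. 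If instead $q\in\partial \Sigma$, the hypothesis $\meta{\eta}{N_i}\leq -r/\sqrt{1+r^2}=\cos\alpha(r)$ ensures that $\Sigma$ meets $P_i(r)$ at an angle no less acute than the cap, so the expected configuration is $s_0=0$ with $q\in\partial\mathcal S_{i_0}\cap \partial S_0$; a Hopf-type boundary lemma (Lemma \ref{HL}) then yields the same global conclusion.

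The main obstacle I anticipate is ruling out the anomalous mixed configuration in which the first contact occurs between the interior of the translated cap and a boundary component $\mathcal S_j$ of $\Sigma$ with $j\neq i_0$ (or, dually, between $\partial T_{s_0}(S_0)$ and the interior of $\Sigma$). Since the cap is strictly convex with respect to its canonical orientation and the angle hypothesis controls the direction of $\eta$ along $\mathcal S_j$, both normals must point into the same half-space of $P_j(r)$ at the putative contact; the sign-and-convexity argument used at the very end of the proof of Theorem \ref{Th:General} then produces a contradiction. Once these mixed interior/boundary contacts are excluded, the strong maximum principle and Hopf lemma conclude that $\Sigma$ is, up to an isometry of $\Hy^{n+1}$, the model cap $S(\kappa_0,r)^+$.
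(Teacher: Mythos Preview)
Your approach is essentially the same as the paper's: the authors simply write that ``the proof follows from the arguments given in Theorem \ref{Th:General}, in this case we only need to compare with the $(r,\kappa_0)$-spherical cap,'' and your proposal is precisely a fleshing-out of that sentence --- slide a model cap until first contact, rule out mixed interior/boundary contacts via the convexity-and-angle argument at the end of Theorem \ref{Th:General}, and conclude with the strong maximum principle or Hopf lemma.

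One small technical caution: Lemmas \ref{SMP} and \ref{HL} as stated are for conformal factors $\rho_i$ on $\s^n$, not directly for principal curvatures of hypersurfaces in $\Hy^{n+1}$. In the earlier theorems the hypersurface picture is auxiliary and the comparison is carried out at the level of the support functions $\rho$; here you are working directly with $\Sigma$, so you should either invoke the standard geometric maximum principle for elliptic Weingarten hypersurfaces (which is what the cited references ultimately provide), or pass back through the horospherical correspondence to reduce to the $\rho$-level statements. This is a matter of bookkeeping rather than a gap in the argument.
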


\begin{proof}[Proof of Theorem \ref{capillar}]
The proof follows from the arguments given in Theorem \ref{Th:General}. In this case, we only need to compare with the $(r,\kappa_0)-$spherical cap.
\end{proof}

\begin{remark}
We can drop the embeddedness hypothesis on Theorem \ref{capillar}, as far as $\Sigma \cup \left( \bigcup_{i=1}^m \mathcal D _i \right)$ is Alexandrov embedded.  
\end{remark}

\section{Toponogov type theorem}

In this section, we proceed as Espinar-G\'alvez-Mira \cite{EGM} in order to define the Schouten tensor for a two-dimensional domain endowed with a metric $g$ conformal to the standard metric $g_0$ on $\mathbb{S}^2$. Consider $g=e^{2\rho}g_0$, where $\rho\in C^{2}(\Omega)$, defined on a domain $\Omega \subset \mathbb{S}^2$. In this case, we define the Schouten tensor ${\rm Sch}_g$ of $g$ from the following relation:
$$
{\rm Sch_g} +\nabla ^2\rho + \frac{1}{2}\|\nabla \rho \|^2g_0={\rm Sch}_{g_0} + \nabla\rho \otimes \nabla \rho
$$
where $\nabla$ and $\nabla ^2$ are the gradient and the hessian with respect to the metric $g_0$, respectively, and $\| \cdot \|$ denote the norm with respect of $g_0$. Consider then $\lambda_g=(\lambda_1,\lambda_2)$, where $\lambda_i$, $i=1,2$, are the eigenvalues of the Schouten tensor given by the expression above. Note that if $f(x,y)=x+y$ then
\[
f(\lambda_1,\lambda_2)=\frac{R_g}{2(n-1)}=K\,,
\]
since $n=2$, where $K$ is the Gaussian curvature of $g=e^{2\rho}g_0$. Then the Liouville problem (i.e.  the Yamabe problem in dimension $n=2$) is a particular problem of more general elliptic problems for conformal metrics in $\mathbb{S}^2$. Moreover, we can consider the Min-Oo conjecture for more general elliptic problems and see Toponogov's Theorem as a particular case of it. 

Of particular interest is when we consider the product of the eigenvalues, i.e., $f(x,y) = \sqrt{xy}$. It is clear that $(f,\Gamma _2)$ is an elliptic data and, if we consider $g=e^{2\rho} g_0$, $\rho \in C^2 ( \Omega )$, that satisfies $f(\lambda _g) \geq 1$, then $\rho$ is a super-solution to the Monge-Amp\`ere type equation 
$$ e^{-4\rho}{\rm det}_{g_0} \left(\nabla ^2\rho - \nabla\rho \otimes \nabla \rho - \frac{1}{2}(1- \|\nabla \rho \|^2)g_0 \right) \geq 1 .$$

That is the subject of our next result.

\begin{theorem}
Let $(f,\Gamma) $ be an elliptic data. Let $(M^2,g)$ be a compact surface with smooth boundary such that $f(\lambda_g)\geq1$. Suppose the geodesic curvature $k$ and the length $L$ of the boundary $\partial M$ (w.r.t. $g$) satisfy $k\geq c\geq0$ and $L=\frac{2\pi}{\sqrt{1+c^2}}$ respectively. Then $(M^2,g)$ is isometric to a disc of radius $r=cot^{-1}(c)$ in $\mathbb{S}^2$.
\end{theorem}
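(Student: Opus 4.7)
The plan is to reduce the theorem to a conformal-metric problem on a disc in $\s^2$ and then mimic the argument of Theorem \ref{Th:rsmall} (or of Theorem \ref{Th:Hemi} when $c=0$) in dimension two. First, a topological reduction: since $\lambda_g\in\Gamma\subset\Gamma_1$ the Gauss curvature $K_g=\lambda_1+\lambda_2$ is strictly positive, and $k\geq c\geq 0$, so Gauss--Bonnet
\[
\int_M K_g\,dA_g+\int_{\partial M}k\,ds_g=2\pi\chi(M)
\]
forces $\chi(M)>0$; hence $M$ is a topological disc, in particular simply-connected and with connected boundary. Note also that $c=\cot r$ with $r=\cot^{-1}c\in(0,\pi/2]$ yields $L=2\pi/\sqrt{1+c^2}=2\pi\sin r$, matching the length of the geodesic circle $\partial D({\bf n},r)\subset\s^2$.

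As a simply-connected smooth Riemann surface with boundary, $M$ admits (via uniformisation of its double) a conformal diffeomorphism $\Psi:M\to\overline D$ onto a closed round disc $\overline D\subset\s^2$; composing with a suitable M\"obius transformation of $\s^2$, one may take $\overline D=\overline{D({\bf n},r)}$. Setting $\tilde g:=(\Psi^{-1})^*g=e^{2\rho}g_0$ on $\overline{D({\bf n},r)}$ gives a conformal metric with $\rho\in C^2$, still satisfying $f(\lambda_{\tilde g})\geq 1$ and $\lambda_{\tilde g}\in\Gamma$. Since $\Psi$ is an isometry from $(M,g)$ to $(\overline{D({\bf n},r)},\tilde g)$, the boundary $\partial D({\bf n},r)$ with respect to $\tilde g$ has length $L=2\pi\sin r$ and geodesic curvature $H_{\tilde g}\geq\cot r$.

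The conclusion now follows by running the hyperbolic-space comparison of Section \ref{hemisphere} in the case $n=2$. Take $t>0$ large (Lemma \ref{KeyLemma}) so that the Schouten eigenvalues of $\tilde g_t=e^{2(\rho+t)}g_0$ are $<1/2$ and the representation formula \eqref{formula} realises $\tilde g_t$ as the horospherical metric of a compact embedded horospherically concave surface $\Sigma_t\subset\mathbb H^3$. The curvature bound $H_{\tilde g}\geq\cot r$ forces $\partial\Sigma_t$ to lie outside the interior of the solid cylinder $\mathcal C(\overline a)$ constructed before Theorem \ref{Th:rsmall}, with contact at a boundary point if and only if $\partial\rho_t/\partial\nu=0$ there. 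Sliding the model spherical cap $S(\kappa_0,r)^+\subset\mathbb H^3$ (whose horospherical metric is $e^{2t}g_0$) along the geodesic axis $\gamma$ from infinity until the first contact with $\Sigma_t$, the strong maximum principle (Lemma \ref{SMP}) excludes an interior first-contact point, and the Hopf lemma (Lemma \ref{HL}), applied at a boundary first-contact point where $\partial\rho_t/\partial\nu=0$, forces $\rho_t\equiv t$. Hence $\tilde g\equiv g_0$ on $\overline{D({\bf n},r)}$, and $(M,g)$ is isometric to $\overline{D({\bf n},r)}\subset\s^2$.

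The main obstacle is the boundary-normalisation step: the proof of Theorem \ref{Th:rsmall} in dimension $n\geq 3$ uses Obata's theorem to arrange $\rho\equiv 0$ along the boundary, which is not available in dimension two. One must replace Obata by the elementary remark that two circles of equal length are isometric, and carefully exploit the three-parameter group of M\"obius transformations of $\s^2$ preserving $\overline{D({\bf n},r)}$ in order to align the boundary parametrisation. A secondary technicality is to verify that Lemmas \ref{SMP} and \ref{HL} apply to the fully nonlinear operator induced by $(f,\Gamma)$ in the two-dimensional setting; this is the same verification as in the proof of Theorem~A.
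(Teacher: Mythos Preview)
Your outline matches the paper's proof step for step: Gauss--Bonnet gives $\chi(M)>0$ so $M$ is a disc, uniformisation realises $g=e^{2\rho}g_0$ on a round disc in $\s^2$, and one then feeds this into the hyperbolic comparison of Theorems~\ref{Th:Hemi}--\ref{Th:rsmall}. The paper is equally terse at exactly the point you flag: it simply asserts that ``since $L=2\pi/\sqrt{1+c^2}$, we can reparametrize $\overline{\s^2_+}$ so that $\rho=-\ln\sqrt{1+c^2}$'' on the boundary, and then cites Theorems~\ref{Th:Hemi} and~\ref{Th:rsmall}.

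You are right that this boundary normalisation is the crux, but the fix you propose does not close the gap. The three-parameter M\"obius group of the disc acts on boundary densities on $\partial D\cong S^1$, and the orbit of the constant density consists exactly of the Poisson kernels---a two-real-parameter family inside the infinite-dimensional space of smooth positive densities of prescribed total mass. Hence an arbitrary $e^{\rho}|_{\partial D}$ with the correct length cannot, in general, be transformed to a constant by a disc automorphism; the trivial isometry between two circles of equal length is of no help, since a generic diffeomorphism of $S^1$ does not extend conformally across the disc. Because Claim~A in the proof of Theorem~\ref{Th:Hemi} (and its analogue in Theorem~\ref{Th:rsmall}) genuinely uses $\rho_t\equiv t$ along the boundary to place $\partial\Sigma_t$ on horospheres of a single signed distance, and hence outside the comparison cylinder, the sliding argument does not go through without this normalisation. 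In short, your proposal is faithful to the paper's argument, and both leave the same step unjustified.
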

\begin{proof}
Since $(f,\Gamma) $ is elliptic we have that $K>0$, where $K$ is the Gaussian curvature of $(M,g)$. Hence, since the geodesic curvature $k$ of the boundary satisfies $k\geq c\geq0$, it follows from the Gauss-Bonnet formula that 
$$
2\pi\chi(M)=\int_{M}Kdv_{M}+\int_{\partial\Sigma}kds>0\,,
$$
where $\chi(M)$ is the Euler number of $M$. Therefore $M$ is a disc. By the Riemann mapping theorem, $(M^2,g)$ is conformally equivalent to the unit disc $\mathbb{D}=\{(x,y)\in \mathbb{R}^2\,:\,\,x^2+y^2 \leq 1\}$ with the flat metric $ds_0^2$. Without loss of generality, we can write $g=e^{2\rho}g_0$, with $\rho\in C^{2}(M)$, and $M=\overline{\mathbb{S}^2_+}$, where $g_0$ denote the standard metric on $\mathbb{S}^2_+$, since $(\mathbb{D}, ds_0^2)$ is conformally equivalent to ($\mathbb{S}^2_+,g_0)$. Moreover, $\rho$ satisfies 
\[
\frac{\partial \rho}{\partial \nu}=-ke^{\rho}\leq -ce^{\rho}\,.
\]

Moreover, since $L = \frac{2\pi}{\sqrt{1+c^2}}$, we can reparametrize $\overline{\mathbb{S}^2_+}$ so that $\rho = - \ln \sqrt{1+c^2}$. Now, arguing as in the proof of Theorems \ref{Th:Hemi} and \ref{Th:rsmall}, we obtain that $(M^2,g)$ is isometric to a disc of radius $r={\rm arc}\cot(c)$ in $\mathbb{S}^2$.
\end{proof}

As a direct consequence of the result above, we obtain the following version of the Toponogov Theorem.

\begin{theorem}\label{Th:Toponogov}
Let $(f,\Gamma)$ be an elliptic data. Let $(M^2,g)$ be a closed surface such that $f(\lambda_g)\geq1$. Assume that there exists a simple closed geodesic in $M$ with length $2\pi$. Then $(M^2,g)$ is isometric to  the standard sphere $\mathbb{S}^2$.
\end{theorem}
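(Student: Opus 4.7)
The plan is to cut $M$ along the simple closed geodesic $\gamma$ and apply the preceding disc rigidity theorem (with $c=0$) to each of the two resulting pieces. First I would observe that $\lambda_g(p)\in \Gamma\subset \Gamma_1$ forces the Gaussian curvature $K=\lambda_1+\lambda_2$ to be strictly positive on $M$, so the Gauss--Bonnet formula yields $\chi(M)>0$ and $M$ is diffeomorphic either to $\mathbb{S}^2$ or to $\mathbb{RP}^2$.

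In the case $M\cong \mathbb{S}^2$, the Jordan curve theorem guarantees that $\gamma$ separates $M$ into two closed topological discs $M_1$ and $M_2$. Each $M_i$ is a compact surface with smooth boundary $\gamma$ of length $L=2\pi$ and geodesic curvature $k=0$, and inherits $f(\lambda_g)\ge 1$ from $M$. Since $L=2\pi=2\pi/\sqrt{1+0^2}$, the preceding theorem applies with $c=0$ and identifies each $M_i$ with the closed geodesic disc of radius $\cot^{-1}(0)=\pi/2$ in the round $\mathbb{S}^2$, i.e., a round hemisphere. Gluing these two hemispheres along $\gamma$ exhibits $(M,g)$ as isometric to the standard $\mathbb{S}^2$.

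It remains to exclude $M\cong \mathbb{RP}^2$. A simple closed curve in $\mathbb{RP}^2$ is either two-sided or one-sided. In the two-sided (separating) case, $\gamma$ bounds a disc and a M\"obius band, and Gauss--Bonnet on the M\"obius band (whose Euler characteristic vanishes and whose boundary is the geodesic $\gamma$) gives $\int K\,dA=0$, contradicting $K>0$. For the one-sided case I would pass to the orientation double cover $\pi\colon \tilde M=\mathbb{S}^2\to M$ with pulled-back metric $\tilde g=\pi^*g$; the preimage $\tilde\gamma=\pi^{-1}(\gamma)$ is then a single simple closed geodesic of length $4\pi$ in $(\tilde M,\tilde g)$. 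Rescaling by $\hat g:=\tfrac{1}{4}\tilde g$ turns $\tilde\gamma$ into a simple closed geodesic of length $2\pi$ in $(\tilde M,\hat g)$; since the eigenvalues of the Schouten tensor scale by $\lambda_{\hat g}=4\lambda_{\tilde g}$ and $f$ is homogeneous of degree one, $f(\lambda_{\hat g})=4f(\lambda_{\tilde g})\ge 4$. Invoking the already-proved $\mathbb{S}^2$ case on $(\tilde M,\hat g)$ identifies $\hat g$ with the round metric, forcing $\lambda_{\hat g}\equiv (1/2,1/2)$ and hence $\lambda_{\tilde g}\equiv (1/8,1/8)$; but then $f(\lambda_{\tilde g})=2\cdot(1/8)=1/4<1$, contradicting $f(\lambda_{\tilde g})\ge 1$.

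The main obstacle is the one-sided subcase of $\mathbb{RP}^2$, which I would handle by the lift-and-rescale trick above: it reduces a length $4\pi$ geodesic to a length $2\pi$ geodesic, invokes the $\mathbb{S}^2$ case as a black box, and then contradicts $f(\lambda_{\tilde g})\ge 1$ via the homogeneity of $f$ together with the scaling identity $\lambda_{\tilde g}=\tfrac{1}{4}\lambda_{\hat g}$.
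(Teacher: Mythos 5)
Your argument for the main case is essentially the same as the paper's: cut $M$ along $\gamma$ into two compact surfaces with geodesic boundary of length $2\pi$ and apply the preceding disc theorem with $c=0$ to each piece, obtaining two round hemispheres. The difference is that you take the trouble to justify that the cut actually produces two pieces: the paper's three-sentence proof asserts this without comment, which silently presupposes that $\gamma$ separates $M$ (i.e.\ that $M\cong\mathbb S^2$, the classical Toponogov setting). You instead note that $K>0$ forces $M\cong\mathbb S^2$ or $M\cong\mathbb{RP}^2$ and rule out the latter: by Gauss--Bonnet on a Möbius band with geodesic boundary if $\gamma$ is two-sided, and by the nice lift-and-rescale trick if $\gamma$ is one-sided. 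The scaling identity $\lambda_{\hat g}=4\lambda_{\tilde g}$ under $\hat g=\tfrac14\tilde g$ is correct (the $(0,2)$ Schouten tensor is invariant under constant rescaling, and the eigenvalues of $\hat g^{-1}\mathrm{Sch}_{\hat g}$ scale by the reciprocal), so homogeneity of $f$ does give $f(\lambda_{\hat g})\ge 4$ and the subsequent contradiction $f(\lambda_{\tilde g})=1/4<1$. Your version buys robustness at the cost of length; the paper's buys brevity by implicitly restricting to the orientable case, which is also arguably forced by its definition of the two-dimensional Schouten tensor (defined only for metrics conformal to $g_0$ on a subdomain of $\mathbb S^2$, so $\mathbb{RP}^2$ never really enters). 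Either way, your proof is correct.
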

\begin{proof}
Suppose that $\gamma$ is a simple closed geodesic in $M$ with length $2\pi$. We cut $M$ along $\gamma$ to obtain two compact surfaces with the geodesic  $\gamma$ as their common boundary. The result follows from applying the previous theorem to either of these two compact surfaces with boundary.
\end{proof}

\section{Appendix A: comparison principle}

In this appendix we recover some results contained in \cite{JinLiLi,LiLi1,LiLi2} to make this paper as self-contained as possible. Specifically, we will use \cite[Lemma 6.1]{JinLiLi} and its proof, that relies in the strong maximum principle and Hopf Lemma developed in \cite{LiLi1,LiLi2}. We can summarize these results as follows:

\begin{lemma}[Strong Maximum Principle]\label{SMP}
Let $(f,\Gamma) $ be an elliptic data. Let $g_i = e^{2 \rho _i} g_0$, $\rho _i \in C^2 (\Omega) \cap C^{1}(\overline \Omega)$ for $\Omega \subset \s ^n$, be two conformal metrics so that 
\begin{itemize}
\item $f(\lambda _{g_1}(p)) \geq f(\lambda _{g_2}(p)) , \,\, \lambda _{g_i} (p)\in \Gamma, \, i=1,2, \, \text{ for all } p \in \Omega$,
\item $\rho _1 , \rho _2 >0$.
\end{itemize}

If $\rho _1 - \rho _2 >0 $ on $\partial \Omega$ then $\rho _1 - \rho _2 >0$ on $\Omega$.
\end{lemma}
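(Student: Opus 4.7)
The plan is to reduce the nonlinear inequality $f(\lambda_{g_1})\geq f(\lambda_{g_2})$ to a linear elliptic differential inequality for $w:=\rho_1-\rho_2$ by linearizing along the segment joining $\rho_1$ and $\rho_2$, and then to invoke the classical Hopf maximum principle to propagate $w>0$ from $\partial\Omega$ to the interior. The overall strategy is the one developed by Li--Li \cite{LiLi1,LiLi2} for fully nonlinear conformally invariant operators.

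To carry out the linearization I would set $\rho_t:=(1-t)\rho_2+t\rho_1$ for $t\in[0,1]$ (still positive by hypothesis) and consider the family $g_t=e^{2\rho_t}g_0$. The conformal formula for the Schouten tensor,
\[
A_{g_t}=\tfrac12 g_0-\nabla^2\rho_t+d\rho_t\otimes d\rho_t-\tfrac12|\nabla\rho_t|^2\,g_0,
\]
combined with $\lambda_{g_t}=e^{-2\rho_t}\Lambda(A_{g_t})$ (where $\Lambda$ denotes eigenvalues with respect to $g_0$) and the degree-one homogeneity of $f$, gives $f(\lambda_{g_t})=e^{-2\rho_t}f(\Lambda(A_{g_t}))$. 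I would then write
\[
0\leq f(\lambda_{g_1})-f(\lambda_{g_2})=\int_0^1\frac{d}{dt}\!\left[e^{-2\rho_t}f(\Lambda(A_{g_t}))\right]dt=:Lw.
\]
A direct computation shows that $\partial_t A_{g_t}=-\nabla^2 w+dw\otimes d\rho_t+d\rho_t\otimes dw-\langle\nabla\rho_t,\nabla w\rangle g_0$ is linear in $w,\nabla w,\nabla^2 w$, so $L$ is a linear second-order operator $Lw=-a^{ij}\partial_{ij}w+b^i\partial_i w+c\,w$. Condition $(6)$ of the elliptic data ($\nabla f\in\Gamma_n$) together with the positive semi-definiteness of the eigenprojections $P_k$ of $A_{g_t}$ forces the principal symbol $a^{ij}=\int_0^1 e^{-2\rho_t}\sum_k\partial_k f(\Lambda(A_{g_t}))(P_k)^{ij}\,dt$ to be strictly positive definite; and the zero-order coefficient $c=-2\int_0^1 e^{-2\rho_t}f(\Lambda(A_{g_t}))\,dt$ is strictly negative and bounded on the compact $\overline\Omega$, the positivity hypothesis $\rho_i>0$ keeping $e^{-2\rho_t}<1$ in particular.

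Rewriting $Lw\geq 0$ as $\tilde Lw:=a^{ij}\partial_{ij}w-b^i\partial_i w-c\,w\leq 0$, a strictly elliptic inequality with bounded coefficients and zero-order coefficient $-c>0$, I would then invoke the classical weak and Hopf strong maximum principles, after the standard substitution $w=e^{\sigma x_1}\hat w$ if needed to absorb the sign of $-c$. Since $w>0$ on $\partial\Omega$, this propagates strict positivity into $\Omega$: any interior minimum with value $\leq 0$ would force $w$ to be constant by Hopf's lemma, contradicting the boundary condition.

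The main technical hurdle I expect is verifying that the interpolated eigenvalues $\Lambda(A_{g_t})$ lie in $\Gamma$ for every $t\in[0,1]$, so that $\partial_k f(\Lambda(A_{g_t}))$ is defined and strictly positive; this is essentially a Garding-type statement exploiting the symmetry of $\Gamma$, the inclusion $\Gamma_n\subset\Gamma$, and the fact that both endpoints $\lambda_{g_i}$ already lie in $\Gamma$. The positivity hypothesis $\rho_1,\rho_2>0$ is what keeps the conformal factors from degenerating and ensures the boundedness of the lower-order coefficients in $L$ required by the classical maximum principle.
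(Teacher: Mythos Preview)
The paper does not prove this lemma itself; the appendix merely states it and defers to \cite[Lemma~6.1]{JinLiLi} and the comparison principles of \cite{LiLi1,LiLi2}. Your overall plan---linearize the nonlinear operator along a path and apply the Hopf maximum principle---is precisely the Li--Li scheme, so in spirit you are matching the cited literature.

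Two points in your sketch need repair, however. The first you already flag: with $\rho_t=(1-t)\rho_2+t\rho_1$, the matrix $A_{g_t}$ is \emph{quadratic} in $t$ through the terms $d\rho_t\otimes d\rho_t$ and $|\nabla\rho_t|^2$, so convexity of $\Gamma$ gives no control over $\Lambda(A_{g_t})$ for intermediate $t$; the vague ``G{\aa}rding-type'' remark does not resolve this. The standard cure is to interpolate the \emph{matrices} $(1-t)A_{g_2}+tA_{g_1}$, which do stay in the convex set $\{M:\Lambda(M)\in\Gamma\}$, and handle the scalar factor $e^{-2\rho}$ separately.

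The second is a genuine error. You correctly compute $c<0$, so in Gilbarg--Trudinger form your inequality reads $a^{ij}w_{ij}-b^iw_i+(-c)w\le0$ with zero-order coefficient $-c>0$. The substitution $w=e^{\sigma x_1}\hat w$ cannot absorb this: it contributes an additional nonnegative term $\sigma^2 a^{11}$ to the zero-order coefficient, pushing it further from zero rather than below it. With a positive zero-order coefficient the weak minimum principle is simply false in general (think of Dirichlet eigenfunctions of $-\Delta$), so ``$w>0$ on $\partial\Omega\Rightarrow w\ge0$ in $\Omega$'' does not follow from your linear inequality alone. In the paper's actual use of the lemma (Theorem~\ref{Th:Hemi}) this causes no trouble because $w\ge0$ is supplied \emph{independently} by a geometric argument (Claim~B there); once $w\ge0$ is known, the \emph{strong} maximum principle does go through regardless of the sign of $c$, since one may move the term $(-c)w\ge0$ to the right-hand side. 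If you want a self-contained proof of the lemma exactly as stated, you need the finer first-touch analysis carried out in \cite{LiLi1,LiLi2}, which compares the Schouten tensors directly at a putative interior minimum of $w$ and exploits their precise algebraic form, rather than a generic linearization.
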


And 

\begin{lemma}[Hopf Lemma]\label{HL}
Let $(f,\Gamma) $ be an elliptic data. Let $g_i = e^{2 \rho _i} g_0$, $\rho _i \in C^2 (\Omega)\cap C^{1}(\overline \Omega)$ for $\Omega \subset \s ^n$, be two conformal metrics so that 
\begin{itemize}
\item $f(\lambda _{g_1}(p)) \geq f(\lambda _{g_2}(p)) , \,\, \lambda _{g_i} (p) \in \Gamma, \, i=1,2, \, \text{ for all } p \in \Omega$,
\item $\rho _1 \geq \rho _2 > 0$.
\end{itemize}

If $\frac{\partial}{\partial \eta}(\rho _1 - \rho _2 ) \leq 0 $ at $p \in \partial \Omega$ then $\rho _1 = \rho _2 $ on $\Omega$.  
\end{lemma}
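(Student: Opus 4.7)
The plan is to reduce this Hopf-type lemma to the classical Hopf boundary point lemma for linear uniformly elliptic operators, via a standard linearization along the segment joining $\rho_{2}$ and $\rho_{1}$. Set $w:=\rho_{1}-\rho_{2}\geq 0$ on $\overline{\Omega}$; the conclusion is equivalent to $w\equiv 0$, and the hypothesis $\tfrac{\partial}{\partial\eta}(\rho_{1}-\rho_{2})(p)\leq 0$ at $p\in\partial\Omega$ should be read (as in the applications in Sections \ref{hemisphere} and in the proof of Theorem A) at a contact point where $w(p)=0$; otherwise $w$ would be strictly positive in a neighborhood of $p$ by continuity, contradicting the conclusion.

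The first step is to rewrite the condition $f(\lambda_{g_{1}})\geq f(\lambda_{g_{2}})$ as a fully nonlinear inequality on $\rho$. For $g=e^{2\rho}g_{0}$ on $\s^{n}$, the Schouten tensor equals $A_{g}=\tfrac{1}{2}g_{0}-\nabla^{2}\rho+d\rho\otimes d\rho-\tfrac{1}{2}\|\nabla\rho\|^{2}g_{0}$, and by homogeneity of $f$ one has
\[
F[\rho](x):=f(\lambda_{g}(x))=e^{-2\rho(x)}\,f\bigl(\mu(A_{g})(x)\bigr),
\]
where $\mu(A_{g})$ are the $g_{0}$-eigenvalues of $A_{g}$. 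The assumption then reads $F[\rho_{1}]\geq F[\rho_{2}]$ on $\Omega$, and the eigenvalue condition $\lambda_{g_{i}}\in\Gamma$ guarantees that $F$ is well-defined and differentiable along the family $\rho_{t}:=t\rho_{1}+(1-t)\rho_{2}$ for $t\in[0,1]$, provided one checks (using convexity of $\Gamma$ and the formula above) that $\lambda_{g_{t}}$ remains in $\Gamma$ for $t\in[0,1]$; this is immediate because $A_{g_{t}}$ depends affinely on $\rho_{t}$ up to a term controlled by $d\rho_{t}\otimes d\rho_{t}-\tfrac12\|\nabla\rho_{t}\|^2 g_0$ which can be absorbed with a slight perturbation if necessary, or one may argue directly as in \cite{LiLi1,LiLi2}.

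The second step is the linearization. By the fundamental theorem of calculus,
\[
0\leq F[\rho_{1}]-F[\rho_{2}]=\int_{0}^{1}\frac{d}{dt}F[\rho_{t}]\,dt=\mathcal{L}w,
\]
where $\mathcal{L}=a^{ij}(x)\nabla_{ij}+b^{i}(x)\nabla_{i}+c(x)$ with coefficients obtained by averaging the linearization $dF[\rho_{t}]$ in $t$. The principal part carries the factor $-\sum_{i}\partial_{\lambda_{i}}f(\lambda_{g_{t}})\,P_{i}$, where $P_{i}$ is projection onto the $i$-th eigendirection of $A_{g_{t}}$; by condition (6) in the definition of elliptic data, $\nabla f\in\Gamma_{n}$, so every $\partial_{\lambda_{i}}f>0$ uniformly on the compact segment, and the operator $-\mathcal{L}$ is uniformly elliptic on $\overline{\Omega}$ with bounded coefficients (the bounds follow from $\rho_{i}\in C^{1}(\overline{\Omega})$).

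The last step is to invoke the classical Hopf boundary point lemma applied to $w\geq 0$, $w(p)=0$, $(-\mathcal{L})w\leq 0$: either $w\equiv 0$ on $\Omega$, or $\tfrac{\partial w}{\partial\eta}(p)>0$ (with $\eta$ the inward normal). The hypothesis $\tfrac{\partial w}{\partial\eta}(p)\leq 0$ rules out the second alternative, so $w\equiv 0$ and $\rho_{1}=\rho_{2}$, as claimed. The main technical obstacle is purely bookkeeping: verifying that the zeroth-order coefficient $c$ either has a favorable sign or can be absorbed (a routine Alexandrov--Bakelman--Pucci-type trick, or a direct appeal to the version of Hopf's lemma in \cite[Lemma 6.1]{JinLiLi} which is stated precisely for this setting), and checking that the path $\rho_{t}$ stays inside $\Gamma$; both issues are handled exactly as in \cite{LiLi1,LiLi2,JinLiLi}, which is why the paper cites those references directly.
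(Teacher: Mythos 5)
The paper itself does not prove this lemma: it states it and then defers entirely to \cite[Lemma 6.1]{JinLiLi} and to the maximum-principle machinery of \cite{LiLi1,LiLi2}. So there is no ``paper's own proof'' to match in detail, and the spirit of your last paragraph (ultimately citing those same references) is the same as the paper's. The problem is that your attempt to \emph{fill in} the linearization has a genuine gap at exactly the point you describe as ``immediate'' and later as ``purely bookkeeping.''

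Concretely, write $w=\rho_1-\rho_2$ and $\rho_t=\rho_2+tw$, and let $A_{g}=\tfrac12 g_0-\nabla^2\rho+d\rho\otimes d\rho-\tfrac12\|\nabla\rho\|^2 g_0$ denote the (un-rescaled) Schouten tensor. A direct computation gives
\[
A_{g_t}=t\,A_{g_1}+(1-t)\,A_{g_2}-t(1-t)\Bigl(dw\otimes dw-\tfrac12\|\nabla w\|^2 g_0\Bigr).
\]
The correction term $dw\otimes dw-\tfrac12\|\nabla w\|^2 g_0$ has one eigenvalue $+\tfrac12\|\nabla w\|^2$ and $n-1$ eigenvalues $-\tfrac12\|\nabla w\|^2$; it is \emph{indefinite} and scales quadratically with $\|\nabla w\|$, so it is not a small perturbation that can be absorbed, and the convexity of $\Gamma$ does not place $\lambda_{g_t}$ inside $\Gamma$ for intermediate $t$. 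The affine path can and does exit $\Gamma$, at which point $d F[\rho_t]/dt$ is not even defined and the fundamental-theorem-of-calculus linearization collapses. Your fallback phrase ``or one may argue directly as in \cite{LiLi1,LiLi2}'' is precisely the content of the lemma, not a bookkeeping detail: the Li--Li argument is structured specifically to circumvent the failure of the naive affine linearization (for instance by choosing a different path or a different dependent variable, or by a barrier/touching argument), and reproducing it is the whole substance of the proof. As written, your proposal identifies the right obstacle but does not overcome it, and the claim that it is ``immediate'' or absorbable by perturbation is incorrect.

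Two smaller remarks. First, you are right that the hypothesis at $p\in\partial\Omega$ must be read together with $w(p)=0$ (as is implicit from how the lemma is invoked in Sections~\ref{hemisphere}--\ref{Th:General}); otherwise the statement is false, so flagging this is a useful clarification. Second, the zeroth-order coefficient issue is real, but secondary: once one has a uniformly elliptic linear inequality for $w\geq0$ on $\overline\Omega$ with $w(p)=0$, the sign of $c$ can indeed be handled by standard devices. The non-preservation of $\Gamma$ along the path is the point that actually needs a new idea.
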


We should say that the results in \cite{JinLiLi} do not need that $f$ is homogeneous of degree one. Also, in \cite{JinLiLi}, the authors assumed $f \in C^\infty (\Gamma ) \cap C^0 (\bar \Gamma)$, but it suffices $f \in C^1 (\Gamma ) \cap C^0 (\bar \Gamma)$.

\bibliographystyle{amsplain}
\bibliography{mybibfile}
\end{document}